\renewcommand{\cal}{\mathcal}
\def\A{\mathcal{A}}
\def\dr{\ar@{->}[r]}
\begin{document}
\baselineskip=15pt
\title{\Large{\bf Triangulated categories arising from  $n$-fold matrix factorizations}
\footnotetext{Panyue Zhou is supported by the National Natural Science Foundation of China (Grant No. 12371034) and by the Scientific Research Fund of Hunan Provincial Education Department (Grant No. 24A0221).}}
\medskip
\author{Yixia Zhang and Panyue Zhou }

\date{}

\maketitle
\def\blue{\color{blue}}
\def\red{\color{red}}

\newtheorem{theorem}{Theorem}[section]
\newtheorem{lemma}[theorem]{Lemma}
\newtheorem{corollary}[theorem]{Corollary}
\newtheorem{proposition}[theorem]{Proposition}
\newtheorem{conjecture}{Conjecture}
\theoremstyle{definition}
\newtheorem{definition}[theorem]{Definition}
\newtheorem{question}[theorem]{Question}
\newtheorem{remark}[theorem]{Remark}
\newtheorem{remark*}[]{Remark}
\newtheorem{example}[theorem]{Example}
\newtheorem{example*}[]{Example}
\newtheorem{condition}[theorem]{Condition}
\newtheorem{condition*}[]{Condition}
\newtheorem{construction}[theorem]{Construction}
\newtheorem{construction*}[]{Construction}

\newtheorem{assumption}[theorem]{Assumption}
\newtheorem{assumption*}[]{Assumption}

\baselineskip=17pt
\parindent=0.5cm
\vspace{-6mm}

\begin{abstract}
\baselineskip=16pt
Let $\mathcal{A}$ be an additive category and let $T\colon \mathcal{A}\rightarrow \mathcal{A}$ be an additive functor equipped with a natural transformation $\omega\colon \mathrm{Id}_{\mathcal{A}}\rightarrow T$. We prove that the homotopy category of $n$-fold matrix factorizations of $\omega$, denoted ${\rm HFact}_{n}(\mathcal{A},T,\omega)$, admits a natural structure of a right triangulated category. In particular, when $T$ is an automorphism, the homotopy category ${\rm HFact}_{n}(\mathcal{A},T,\omega)$ becomes triangulated.
Furthermore, if $\mathcal{A}$ is a Frobenius exact category and $T$ is an autoequivalence, we obtain that the category ${\rm Fact}_{n}(\mathcal{A},T,\omega)$ of $n$-fold $(\mathcal{A},T)$-factorizations of $\omega$ is a Frobenius exact category.
Consequently, the stable category of the Frobenius exact category ${\rm Fact}_{n}(\mathcal{A},T,\omega)$ is a triangulated category.
\\[0.2cm]
\textbf{Keywords:} triangulated category; exact category; $n$-fold matrix factorization\\[0.1cm]
\textbf{ 2020 Mathematics Subject Classification:} 18G80; 18E10
\end{abstract}

%\pagestyle{myheadings}
%%
%\markboth{\rightline {\scriptsize Yixia Zhang and  Panyue Zhou}}
%         {\leftline{\scriptsize  Triangulated categories arising from  $n$-fold matrix factorizations}}

\vspace{1cm}

\section{Introduction}

Since Eisenbud \cite{E} introduced matrix factorizations in 1980 to study free resolutions over hypersurface singularities in commutative algebra, the theory has developed in many directions. Matrix factorizations have appeared in the study of stable module categories \cite{B}, singularity categories \cite{O1}, quiver representations \cite{KST}, and a variety of geometric and categorical contexts \cite{O2,EP,P}. These developments demonstrate the broad influence of matrix factorizations within representation theory and algebraic geometry.

In order to extend the theory to noncommutative settings, Cassidy, Conner, Kirkman and Moore \cite{CCKM} introduced twisted matrix factorizations. Later, Bergh and Erdmann \cite{BE} studied twisted matrix factorizations for quantum complete intersections of codimension two. Their work generalizes classical matrix factorizations to arbitrary rings. In many respects, the twisted factorizations constructed by  Cassidy and collaborators and those developed by Bergh and Erdmann complement one another, and their constructions agree with the notion of module factorizations introduced by Chen \cite{C2}. Sun and Zhang \cite{SZ} further generalized this framework by introducing the theory of $n$-fold module factorizations. Their work unifies both the commutative theory of $n$-fold matrix factorizations and the noncommutative theory of module factorizations, and establishes triangle equivalences among the relevant stable categories.

From a more categorical point of view, Ballard, Deliu, Favero, Isik and Katzarkov \cite{BDFIK} introduced the idea of factorization categories. Their framework extends the concept of matrix factorizations to arbitrary abelian categories, and creates strong connections with derived categories and categorical symmetry. In recent work, Bergh and J{\o}rgensen \cite{BJ} introduced the category of $n$-fold matrix factorizations associated with a natural transformation for any even integer $n$. Their construction contains both the classical matrix factorizations introduced by Eisenbud and the two-fold factorizations studied in the setting of factorization categories.

When $n$ is an even integer and the functor $T$ is an automorphism, Bergh and J{\o}rgensen
\cite{BJ} proved that the homotopy category of $n$-fold matrix factorizations of a natural transformation carries the structure of an algebraic triangulated category. Their proof makes use of the viewpoint of pretriangulated differential graded categories. This raises a natural question: whether a similar conclusion remains valid when $T$ is replaced by an arbitrary additive functor.

In this paper we answer this question in the affirmative.
We show that the homotopy category of $n$-fold matrix factorizations admits a right triangulated structure. Let $\mathcal{A}$ be an additive category,  $T\colon \mathcal{A}\rightarrow \mathcal{A}$ be an additive functor, and let $\omega\colon {\rm Id}_{\mathcal{A}}\rightarrow T$ be a natural transformation. The data $(\mathcal{A},T,\omega)$ gives rise to the category
${\rm Fact}_{n}(\mathcal{A},T,\omega),$
whose objects are $n$-fold $(\mathcal{A},T)$-factorizations of $\omega$, that is, sequences
$$
X^{0}\xrightarrow{d_{X}^{0}}X^{1}\xrightarrow{d_{X}^{1}}\cdots\xrightarrow{d_{X}^{\,n-1}}T(X^{0})
$$
whose composite equals $\omega_{X^{0}}$, together with morphisms defined levelwise and compatible with the differentials.
We prove that when $T$ is additive, the associated suspension functor
$$
\Sigma\colon {\rm HFact}_{n}(\mathcal{A},T,\omega)\longrightarrow {\rm HFact}_{n}(\mathcal{A},T,\omega)
$$
induced by rotation of the factorization is itself additive. Consequently, the homotopy category
$$
{\rm HFact}_{n}(\mathcal{A},T,\omega)
$$
admits a natural structure of a right triangulated category, where the distinguished right triangles are given by mapping cones of morphisms of $n$-fold factorizations.
When $\mathcal A$ is a Frobenius exact category and, in addition, $T$ is an autoequivalence, we further investigate the exact structure on ${\rm Fact}_{n}(\mathcal{A},T,\omega)$.
By adapting and extending the arguments of Chen in the case of $2$-fold matrix factorizations~\cite{C}, we show that ${\rm Fact}_{n}(\mathcal{A},T,\omega)$ carries a Frobenius exact structure in which the projective-injective objects are precisely the contractible $n$-fold factorizations. As a consequence, the stable category of the Frobenius exact category $({\rm Fact}_{n}(\mathcal{A},T,\omega),\widetilde{\mathcal{E}})$ is a triangulated category.

The paper is organized as follows. In Section~2, we recall the necessary background on $n$-fold factorizations. In Section~3, we establish the right triangulated structure on the homotopy category ${\rm HFact}_{n}(\mathcal{A},T,\omega)$ and verify the required axioms directly from the definitions. In Section~4, we study the exact structure on ${\rm Fact}_{n}(\mathcal{A},T,\omega)$ and prove that it is a Frobenius exact category whenever $\mathcal A$ is a Frobenius exact category and $T$ is an autoequivalence.

\section{Preliminaries}
	
Let $\mathcal{A}$ be an additive category and let $T\colon\mathcal{A}\rightarrow\mathcal{A}$ be an additive functor.
Fix a natural transformation $\omega:\mathrm{Id}_{\mathcal{A}}\rightarrow T$ satisfying
$\omega_{T(X)} = T(\omega_X)$ for every object $X \in \mathcal{A}$.
We begin by recalling the notion of an $n$-fold $(\mathcal{A},T)$-factorization $(X^{j}, d_{X}^{j})$ of $\omega$,
as introduced in \cite{BJ}.

\begin{definition}
Let $n \ge 2$ be a positive integer.
An $n$-fold $(\mathcal{A},T)$-factorization
$X = (X^{j}, d_{X}^{j})$ of $\omega$ is a sequence
\begin{center}
$X^{0} \stackrel{d_{X}^{0}}{\longrightarrow} X^{1}
   \stackrel{d_{X}^{1}}{\longrightarrow} X^{2}
   \stackrel{d_{X}^{2}}{\longrightarrow} X^{3}
   \longrightarrow \cdots
   \stackrel{d_{X}^{n-2}}{\longrightarrow} X^{\,n-1}
   \stackrel{d_{X}^{\,n-1}}{\longrightarrow} T(X^{0}),$
\end{center}
in which every $n$-fold composition equals the corresponding component of $\omega$:
\begin{align*}
d_{X}^{\,n-1} \circ d_{X}^{\,n-2} \circ \cdots \circ d_{X}^{1} \circ d_{X}^{0}
  &= \omega_{X^{0}}, \\[2mm]
T(d_{X}^{0}) \circ d_{X}^{\,n-1} \circ d_{X}^{\,n-2} \circ \cdots \circ d_{X}^{2} \circ d_{X}^{1}
  &= \omega_{X^{1}}, \\[2mm]
T(d_{X}^{1}) \circ T(d_{X}^{0}) \circ d_{X}^{\,n-1} \circ \cdots \circ d_{X}^{3} \circ d_{X}^{2}
  &= \omega_{X^{2}}, \\[2mm]
&\hspace{-2cm}\vdots \\[2mm]
T(d_{X}^{\,n-2}) \circ T(d_{X}^{\,n-3}) \circ \cdots \circ T(d_{X}^{0}) \circ d_{X}^{\,n-1}
  &= \omega_{X^{\,n-1}}.
\end{align*}

\end{definition}

A morphism
$\varphi\colon (X^{j}, d_{X}^{j}) \rightarrow (Y^{j}, d_{Y}^{j})$
between two $n$-fold $(\mathcal{A},T)$-factorizations of $\omega$
is a sequence $(\varphi^{0},\varphi^{1},\dots,\varphi^{n-1})$ of morphisms in $\mathcal{A}$
such that the following diagram commutes:
\[
\xymatrix{
X^{0}\ar[r]^{d_{X}^{0}}\ar[d]_{\varphi^{0}}
  & X^{1}\ar[r]^{d_{X}^{1}}\ar[d]_{\varphi^{1}}
  & X^{2}\ar[r]^{d_{X}^{2}}\ar[d]_{\varphi^{2}}
  & \cdots\ar[r]^{d_{X}^{\,n-2}}
  & X^{\,n-1}\ar[r]^{d_{X}^{\,n-1}}\ar[d]_{\varphi^{\,n-1}}
  & T(X^{0})\ar[d]^{T(\varphi^{0})} \\
Y^{0}\ar[r]^{d_{Y}^{0}}
  & Y^{1}\ar[r]^{d_{Y}^{1}}
  & Y^{2}\ar[r]^{d_{Y}^{2}}
  & \cdots\ar[r]^{d_{Y}^{\,n-2}}
  & Y^{\,n-1}\ar[r]^{d_{Y}^{\,n-1}}
  & T(Y^{0}).
}
\]
Composition of morphisms is defined componentwise,
and this endows the collection of all $n$-fold $(\mathcal{A},T)$-factorizations of $\omega$
with the structure of a category, denoted by
${\rm Fact}_{n}(\mathcal{A},T,\omega)$.
Since $\mathcal{A}$ is additive and $T$ is an additive functor,
the category ${\rm Fact}_{n}(\mathcal{A},T,\omega)$ is also additive.

\begin{remark}
\begin{itemize}
    \item[(1)]
    A morphism
    $\varphi = (\varphi^{0},\varphi^{1},\dots,\varphi^{n-1})$
    in $\mathrm{Fact}_{n}(\mathcal{A},T,\omega)$
    is an isomorphism if and only if each $\varphi^{j}~(j=0,1,\cdots,n-1)$ is an isomorphism in $\mathcal{A}$.

    \item[(2)]
    The zero object of $\mathrm{Fact}_{n}(\mathcal{A},T,\omega)$
    is the trivial $n$-fold $(\mathcal{A},T)$-factorization $(0,0)$ of $\omega$.
    Explicitly,
    \[
    (0,0)=
        0 \stackrel{0}{\longrightarrow} 0
        \stackrel{0}{\longrightarrow} 0
        \longrightarrow \cdots
        \stackrel{0}{\longrightarrow} 0
        \stackrel{0}{\longrightarrow} 0,
    \]
    consisting of $n$ consecutive zero morphisms.

    \item[(3)]
    The coproduct of two $n$-fold $(\mathcal{A},T)$-factorizations of $\omega$
    is taken componentwise.
    Since
    \[
    \omega_{X\oplus Y}=
    \begin{bmatrix}
        \omega_{X} & 0 \\
        0 & \omega_{Y}
    \end{bmatrix},
    \]
    the result again forms an $n$-fold $(\mathcal{A},T)$-factorization of $\omega$.
For any $X,Y \in \mathcal{A}$, the coproduct $X \oplus Y$ is given by
\begin{flushleft}
$ X^{0}\oplus Y^{0}
   \xrightarrow{
   \begin{bmatrix}
      d_{X}^{0} & 0 \\
      0 & d_{Y}^{0}
   \end{bmatrix}}
   X^{1}\oplus Y^{1}
   \xrightarrow{
   \begin{bmatrix}
      d_{X}^{1} & 0 \\
      0 & d_{Y}^{1}
   \end{bmatrix}}
   X^{2}\oplus Y^{2}
   \longrightarrow \cdots$
\end{flushleft}

\begin{flushright}
$\cdots \longrightarrow
   X^{n-2}\oplus Y^{n-2}
   \xrightarrow{
   \begin{bmatrix}
      d_{X}^{n-2} & 0 \\
      0 & d_{Y}^{n-2}
   \end{bmatrix}}
   X^{n-1}\oplus Y^{n-1}
   \xrightarrow{
   \begin{bmatrix}
      d_{X}^{n-1} & 0 \\
      0 & d_{Y}^{n-1}
   \end{bmatrix}}
   T(X^{0})\oplus T(Y^{0}).$
\end{flushright}
\end{itemize}
\end{remark}

\begin{definition}
Let $X, Y \in \mathrm{Fact}_{n}(\mathcal{A},T,\omega)$, and let
$$f = (f^{0}, f^{1}, \dots, f^{n-1}),\
 g = (g^{0}, g^{1}, \dots, g^{n-1})\colon X \rightarrow Y$$
be morphisms.
We say that $f$ is \emph{homotopic} to $g$, written $f \sim g$,
if there exist morphisms
$s = (s^{0}, s^{1}, \dots, s^{n-1})$ in $\mathcal{A}$, called \emph{diagonal morphisms},
such that the following diagram commutes:
\[
\centerline{
\xymatrix{
X^{0} \ar[r]^{d_{X}^{0}} \ar[d]
    & X^{1} \ar[r]^{d_{X}^{1}} \ar@{.>}[ld]_{s^{0}} \ar[d]
    & X^{2} \ar[r]^{d_{X}^{2}} \ar@{.>}[ld]_{s^{1}} \ar[d]
    & \cdots \ar[r]^{d_{X}^{n-2}}
    & X^{n-1} \ar[r]^{d_{X}^{n-1}} \ar@{.>}[ld]_{s^{n-2}} \ar[d]
    & T(X^{0}) \ar@{.>}[ld]_{s^{n-1}} \ar[d]
    \\
Y^{0} \ar[r]^{d_{Y}^{0}}
    & Y^{1} \ar[r]^{d_{Y}^{1}}
    & Y^{2} \ar[r]^{d_{Y}^{2}}
    & \cdots \ar[r]^{d_{Y}^{n-2}}
    & Y^{n-1} \ar[r]^{d_{Y}^{n-1}}
    & T(Y^{0})
}}
\]
and the following equalities hold:
\begin{align*}
T(f^{0}-g^{0}) &= d_{Y}^{n-1} \circ s^{\,n-1} + T(s^{0} \circ d_{X}^{0}), \\
f^{1}-g^{1} &= d_{Y}^{0} \circ s^{0} + s^{1} \circ d_{X}^{1}, \\
&~\vdots \\
f^{\,n-1}-g^{\,n-1} &= d_{Y}^{\,n-2} \circ s^{\,n-2} + s^{\,n-1} \circ d_{X}^{\,n-1}.
\end{align*}
\end{definition}

This defines an equivalence relation on the set of morphisms from $X$ to $Y$.
The equivalence class of a morphism $f$ is denoted by $[f]$.
If $f_{1}, f_{2}, g_{1}, g_{2} : X \rightarrow Y$ are morphisms in
$\mathrm{Fact}_{n}(\mathcal{A},T,\omega)$
with $f_{1} \sim g_{1}$ and $f_{2} \sim g_{2}$,
then $f_{1} + f_{2} \sim g_{1} + g_{2}$.
Thus homotopy is compatible with addition, and the sum of classes satisfies
$
[f_{1}] + [f_{2}] = [f_{1} + f_{2}].
$
Similarly, if $f_{1}, f_{2}: X \rightarrow Y$ and
$g_{1}, g_{2}: Y \rightarrow Z$ satisfy
$f_{1} \sim f_{2}$ and $g_{1} \sim g_{2}$,
then
\[
g_{1} \circ f_{1} \sim g_{2} \circ f_{2}.
\]
Hence homotopy is compatible with composition, and the induced composition on classes,
\[
[g] \circ [f] := [\,g \circ f\,],
\]
is well defined.
We therefore obtain the \emph{homotopy category}
$
\mathrm{HFact}_{n}(\mathcal{A},T,\omega),
$
whose objects are those of $\mathrm{Fact}_{n}(\mathcal{A},T,\omega)$
and whose morphisms are the homotopy classes of morphisms in
$\mathrm{Fact}_{n}(\mathcal{A},T,\omega)$.

\section{$\text{HFact}_{n}(\mathcal{A},T,\omega)$ is a right triangulated category}

In this section, we assume that $n$ is an even integer.

\begin{definition}
The suspension $\Sigma X$ of an $n$-fold $(\mathcal{A},T)$-factorization $X$ of $\omega$ is the sequence
	\begin{center}
		$X^{1} \xrightarrow{-d_{X}^{1}} X^{2} \xrightarrow{-d_{X}^{2}} X^{3}\longrightarrow \cdots \longrightarrow X^{n-1} \xrightarrow{-d_{X}^{n-1}} T(X^{0}) \xrightarrow{-T(d_{X}^{0})} T(X^{1})$.
	\end{center}
\end{definition}

\begin{remark}
\begin{itemize}
    \item[(1)]
    The definition above is well defined.
    Since $n$ is even, all signs occurring in an $n$-fold composition cancel,
    and the resulting morphisms satisfy the required identities.

    \item[(2)]
    Any morphism $f\colon X \rightarrow Y$ induces a morphism
    \[
        \Sigma f = (f^{1}, f^{2}, \dots, T(f^{0})) :
        \Sigma X \rightarrow \Sigma Y,
    \]
    which makes the diagram
    \[
    \xymatrix@C=1.3cm{
        X^{1} \ar[r]^{-d_{X}^{1}} \ar[d]^{f^{1}}
            & X^{2} \ar[r]^{-d_{X}^{2}} \ar[d]^{f^{2}}
            & X^{3} \ar[r]^{-d_{X}^{3}} \ar[d]^{f^{3}}
            & \cdots \ar[r]^{-d_{X}^{n-1}}
            & T(X^{0}) \ar[r]^{-T(d_{X}^{0})} \ar[d]^{T(f^{0})}
            & T(X^{1}) \ar[d]^{T(f^{1})}
        \\
        Y^{1} \ar[r]^{-d_{Y}^{1}}
            & Y^{2} \ar[r]^{-d_{Y}^{2}}
            & Y^{3} \ar[r]^{-d_{Y}^{3}}
            & \cdots \ar[r]^{-d_{Y}^{n-1}}
            & T(Y^{0}) \ar[r]^{-T(d_{Y}^{0})}
            & T(Y^{1})
    }
    \]
    commute.

    \item[(3)]
    The suspension functor preserves homotopies, coproducts, addition,
    and composition of morphisms.
    Consequently, it induces an additive endofunctor
    \[
        {\rm HFact}_{n}(\mathcal{A},T,\omega)
            \xrightarrow{\;\Sigma\;}
        {\rm HFact}_{n}(\mathcal{A},T,\omega).
    \]
\end{itemize}
\end{remark}

\begin{definition}
For a morphism $f : X \rightarrow Y$ in
${\rm Fact}_{n}(\mathcal{A},T,\omega)$,
the \emph{mapping cone} of $f$ is defined as
\[
    C_{f} := \Sigma X \oplus Y,
\]
given by the sequence
\[
X^{1}\oplus Y^{0}
   \xrightarrow{d_{C_{f}}^{0}}
X^{2}\oplus Y^{1}
   \xrightarrow{d_{C_{f}}^{1}}
X^{3}\oplus Y^{2}
   \longrightarrow \cdots
   \longrightarrow
T(X^{0})\oplus Y^{n-1}
   \xrightarrow{d_{C_{f}}^{\,n-1}}
T(X^{1})\oplus T(Y^{0}),
\]
where
\[
d_{C_{f}}^{j} =
\begin{bmatrix}
    -d_{X}^{\,j+1} & 0 \\
    f^{\,j+1}      & d_{Y}^{\,j}
\end{bmatrix}
\quad (0 \le j \le n-2),
\qquad
d_{C_{f}}^{\,n-1} =
\begin{bmatrix}
    -T(d_{X}^{0}) & 0 \\
    T(f^{0})      & d_{Y}^{\,n-1}
\end{bmatrix}.
\]
\end{definition}

We verify that this construction yields an $n$-fold
$(\mathcal{A},T)$-factorization of $\omega$.
Indeed, since the integer $n$ is even,
the signs appearing in the definition of $C_{f}$ cancel appropriately,
and the construction is therefore well defined.
Naturally, there are two morphisms in ${\rm Fact}_{n}(\mathcal{A},T,\omega)$:
\[
Y \xrightarrow{i_{f}} C_{f},
\]
displayed in the diagram
\[
\xymatrix{
Y^{0} \ar[r]^{d_{Y}^{0}} \ar[d]^{\renewcommand{\arraystretch}{0.5}\begin{bmatrix}
    \small 0 \\ \small 1
\end{bmatrix}}
    & Y^{1} \ar[r]^{d_{Y}^{1}} \ar[d]^{\renewcommand{\arraystretch}{0.5}\begin{bmatrix}
        0 \\ 1
    \end{bmatrix}}
    & \cdots \ar[r]
    & Y^{n-1} \ar[r]^{d_{Y}^{n-1}} \ar[d]^{\renewcommand{\arraystretch}{0.5}\begin{bmatrix}
        0 \\ 1
    \end{bmatrix}}
    & T(Y^{0}) \ar[d]^{\renewcommand{\arraystretch}{0.5}\begin{bmatrix}
        0 \\ 1
    \end{bmatrix}}
\\
X^{1}\oplus Y^{0} \ar[r]^{d_{C_{f}}^{0}}
    & X^{2}\oplus Y^{1} \ar[r]^{\hspace{4.5mm} d_{C_{f}}^{1}}
    & \cdots \ar[r]
    & T(X^{0})\oplus Y^{n-1} \ar[r]^{d_{C_{f}}^{n-1}}
    & T(X^{1})\oplus T(Y^{0}).
}
\]
Similarly, we have the canonical projection
\[
C_{f} \xrightarrow{\pi_{f}} \Sigma X,
\]
displayed in the diagram
\[
\xymatrix{
X^{1}\oplus Y^{0} \ar[r]^{d_{C_{f}}^{0}} \ar[d]^{\renewcommand{\arraystretch}{0.5}\begin{bmatrix}
    1\ 0
\end{bmatrix}}
    & X^{2}\oplus Y^{1} \ar[r]^{\hspace{4.5mm} d_{C_{f}}^{1}}
        \ar[d]^{\renewcommand{\arraystretch}{0.5}\begin{bmatrix}
            1\ 0
        \end{bmatrix}}
    & \cdots \ar[r]
    & T(X^{0})\oplus Y^{n-1}
        \ar[r]^{d_{C_{f}}^{n-1}}
        \ar[d]^{\renewcommand{\arraystretch}{0.5}\begin{bmatrix}
            1\ 0
        \end{bmatrix}}
    & T(X^{1})\oplus T(Y^{0})
        \ar[d]^{\renewcommand{\arraystretch}{0.5}\begin{bmatrix}
            1\ 0
        \end{bmatrix}}
\\
X^{1} \ar[r]^{-d_{X}^{1}}
    & X^{2} \ar[r]^{-d_{X}^{2}}
    & \cdots \ar[r]
    & T(X^{0}) \ar[r]^{-T(d_{X}^{0})}
    & T(X^{1}).
}
\]

Next, we recall the definition of a right triangulated category as given in \cite{BM, ABM}.

Let $\mathcal{A}$ be an additive category and let
$\Sigma : \mathcal{A} \rightarrow \mathcal{A}$ be an additive  functor.
A \emph{sixtuple} in $\mathcal{A}$ consists of three objects
$X$, $Y$, and $Z$, together with morphisms
$f : X \rightarrow Y$,
$g : Y \rightarrow Z$, and
$h : Z \rightarrow \Sigma X$.
Such a sixtuple is denoted by $(X,Y,Z,f,g,h)$, or equivalently by the sequence
\[
X \stackrel{f}{\longrightarrow}
Y \stackrel{g}{\longrightarrow}
Z \stackrel{h}{\longrightarrow}
\Sigma X.
\]

A morphism $(\alpha, \beta, \gamma)$ between two sixtuples in $\mathcal{A}$
is a commutative diagram
\[
\xymatrix{
X_{1} \ar[r]^{f_{1}} \ar[d]^{\alpha}
    & Y_{1} \ar[r]^{g_{1}} \ar[d]^{\beta}
    & Z_{1} \ar[r]^{h_{1}} \ar[d]^{\gamma}
    & \Sigma X_{1} \ar[d]^{\Sigma \alpha} \\
X_{2} \ar[r]^{f_{2}}
    & Y_{2} \ar[r]^{g_{2}}
    & Z_{2} \ar[r]^{h_{2}}
    & \Sigma X_{2}.
}
\]
If in
addition $\alpha,\beta$ and $\gamma$ are isomorphisms in $\cal C$,
the morphism $(\alpha,\beta,\gamma)$ is then called an
\emph{isomorphism of sextuples.}

\begin{definition}\cite[Definition 2.1 ]{BM} and \cite[Definition 2.1]{ABM}
~A right triangulated category consists of an additive category $\mathcal{A}$
together with an additive functor $\Sigma : \mathcal{A} \rightarrow \mathcal{A}$
and a class $\nabla$ of sixtuples
\[
X \stackrel{f}{\longrightarrow}
Y \stackrel{g}{\longrightarrow}
Z \stackrel{h}{\longrightarrow}
\Sigma X
\]
satisfying the following axioms:

\begin{itemize}[leftmargin=4em]

\item[(RTR1)]
\begin{itemize}
\item[(a)]
Any sixtuple that is isomorphic to a sixtuple in $\nabla$ also belongs to $\nabla$.

\item[(b)]
Every morphism $f : X \rightarrow Y$ appears in a sixtuple in $\nabla$;
that is, there exist morphisms $g : Y \rightarrow Z$ and $h : Z \rightarrow \Sigma X$
such that
$
X \stackrel{f}{\longrightarrow} Y \stackrel{g}{\longrightarrow}
Z \stackrel{h}{\longrightarrow} \Sigma X
\in \nabla.
$

\item[(c)]
For any object $X$ in $\mathcal{A}$, the sixtuple
$
X \stackrel{1_{X}}{\longrightarrow} X
\longrightarrow 0
\longrightarrow \Sigma X
$
lies in $\nabla$.
\end{itemize}

\item[(RTR2)]
If
\[
X \stackrel{f}{\longrightarrow}
Y \stackrel{g}{\longrightarrow}
Z \stackrel{h}{\longrightarrow}
\Sigma X
\in \nabla,
\]
then the rotated sixtuple
\[
Y \stackrel{g}{\longrightarrow}
Z \stackrel{h}{\longrightarrow}
\Sigma X \stackrel{-\Sigma f}{\longrightarrow}
\Sigma Y
\]
also belongs to $\nabla$.

\item[(RTR3)]
If
\[
X_{1} \stackrel{f_{1}}{\longrightarrow} Y_{1}
\stackrel{g_{1}}{\longrightarrow} Z_{1}
\stackrel{h_{1}}{\longrightarrow} \Sigma X_{1}~~\mbox{and}~~
X_{2} \stackrel{f_{2}}{\longrightarrow} Y_{2}
\stackrel{g_{2}}{\longrightarrow} Z_{2}
\stackrel{h_{2}}{\longrightarrow} \Sigma X_{2}
\]
are in $\nabla$, and morphisms
$\alpha\colon X_{1} \rightarrow X_{2}$,
$\beta\colon Y_{1} \rightarrow Y_{2}$
make the left square commute,
then there exists a morphism $\gamma$ such that the entire diagram commutes:
\[
\xymatrix{
X_{1} \ar[r]^{f_{1}} \ar[d]^{\alpha}
    & Y_{1} \ar[r]^{g_{1}} \ar[d]^{\beta}
    & Z_{1} \ar[r]^{h_{1}} \ar@{.>}[d]^{\gamma}
    & \Sigma X_{1} \ar[d]^{\Sigma\alpha} \\
X_{2} \ar[r]^{f_{2}}
    & Y_{2} \ar[r]^{g_{2}}
    & Z_{2} \ar[r]^{h_{2}}
    & \Sigma X_{2}.
}
\]

\item[(RTR4)] {\rm \bf (Octahedral axiom)}
Given a commutative diagram
\[
\xymatrix{
X_{1} \ar[r]^{f_{1}} \ar@{=}[d]
    & Y_{1} \ar[r]^{g_{1}} \ar[d]^{\beta_{1}}
    & Z_{1} \ar[r]^{h_{1}} \ar@{.>}[d]^{\gamma_{1}}
    & \Sigma X_{1} \ar@{=}[d] \\
X_{1} \ar[r]^{\beta_{1} f_{1}}
    & Y_{2} \ar[r]^{g_{2}} \ar[d]^{\beta_{2}}
    & Z_{2} \ar[r]^{h_{2}} \ar@{.>}[d]^{\gamma_{2}}
    & \Sigma X_{1} \ar[d]^{\Sigma f_{1}} \\
& Y_{3} \ar@{=}[r] \ar[d]^{\beta_{3}}
    & Y_{3} \ar[r]^{\beta_{3}} \ar[d]^{\Sigma(g_{1}) \beta_{3}}
    & \Sigma Y_{1} \\
& \Sigma Y_{1} \ar[r]^{\Sigma g_{1}}
    & \Sigma Z_{1},
}
\]
where the first two rows and the second column are in $\nabla$,
there exist morphisms
\(\gamma_{1}\colon Z_{1} \rightarrow Z_{2}\)
and
\(\gamma_{2} \colon Z_{2} \rightarrow Y_{3}\)
such that the entire diagram commutes
and the third column is also in $\nabla$.
\end{itemize}
\end{definition}

The elements of $\nabla$ are called \emph{right triangles}.
Note that a right triangulated category is a triangulated category
if and only if $\Sigma$ is an automorphism.
The functor $\Sigma$ is called the \emph{suspension functor} of the right triangulated category $(\A,\Sigma,\nabla)$.
\vspace{2mm}

Now we define sixtuples in the homotopy category
${\rm HFact}_{n}(\mathcal{A},T,\omega)$.

\begin{definition}
A \emph{sixtuple} in
${\rm HFact}_{n}(\mathcal{A},T,\omega)$
consists of three objects
$X$, $Y$, and $Z$, together with homotopy classes of morphisms
$[f] : X \rightarrow Y$,
$[g] : Y \rightarrow Z$,
$[h] : Z \rightarrow \Sigma X$.
Such a sixtuple is written
$(X, Y, Z, [f], [g], [h])$,
or equivalently as the sequence
$$
X \stackrel{[f]}{\longrightarrow}
Y \stackrel{[g]}{\longrightarrow}
Z \stackrel{[h]}{\longrightarrow}
\Sigma X.
$$
A morphism
$([\alpha], [\beta], [\gamma])$
between two sixtuples in
${\rm HFact}_{n}(\mathcal{A},T,\omega)$
is a commutative diagram
$$
\xymatrix{
X_{1} \ar[r]^{[f_{1}]} \ar[d]^{[\alpha]}
    & Y_{1} \ar[r]^{[g_{1}]} \ar[d]^{[\beta]}
    & Z_{1} \ar[r]^{[h_{1}]} \ar[d]^{[\gamma]}
    & \Sigma X_{1} \ar[d]^{\Sigma [\alpha]} \\
X_{2} \ar[r]^{[f_{2}]}
    & Y_{2} \ar[r]^{[g_{2}]}
    & Z_{2} \ar[r]^{[h_{2}]}
    & \Sigma X_{2}.
}
$$
\end{definition}

\begin{remark}
\begin{itemize}

    \item[(1)]
    A morphism between two sixtuples is an isomorphism
    if and only if each of the vertical morphisms is an isomorphism.

    \item[(2)]
    If $f_{1}$ is homotopic to $f_{2}$ via a homotopy
    $s = (s^{0}, s^{1}, \dots, s^{n-1})$ in
    ${\rm Fact}_{n}(\mathcal{A},T,\omega)$,
    then the mapping cones $C_{f_{1}}$ and $C_{f_{2}}$
    are isomorphic in
    ${\rm HFact}_{n}(\mathcal{A},T,\omega)$.
    An isomorphism is given by
    $$
    \lambda =
    \left(
    \begin{bmatrix} 1 & 0 \\ s^{0} & 0 \end{bmatrix},
    \begin{bmatrix} 1 & 0 \\ s^{1} & 0 \end{bmatrix},
    \dots,
    \begin{bmatrix} 1 & 0 \\ s^{\,n-1} & 0 \end{bmatrix}
    \right).
    $$
    Its inverse is
    $$
    \lambda^{-1} =
    \left(
    \begin{bmatrix} 1 & 0 \\ -s^{0} & 0 \end{bmatrix},
    \begin{bmatrix} 1 & 0 \\ -s^{1} & 0 \end{bmatrix},
    \dots,
    \begin{bmatrix} 1 & 0 \\ -s^{\,n-1} & 0 \end{bmatrix}
    \right).
    $$

\end{itemize}
\end{remark}

We have now equipped ${\rm HFact}_{n}(\mathcal{A},T,\omega)$ with all the essential
homological constructions: the suspension functor, the notion of homotopy, and the mapping
cone associated to each morphism.
Moreover, we have shown that mapping cones are invariant under homotopy,
so that the canonical sixtuples
$$
X \stackrel{[f]}{\longrightarrow}
Y \stackrel{[i_{f}]}{\longrightarrow}
C_{f} \stackrel{[\pi_{f}]}{\longrightarrow}
\Sigma X
$$
are well defined in the homotopy category.
These sixtuples behave analogously to the distinguished triangles in classical homological
contexts, and thus naturally suggest a candidate for the class of right triangles in
${\rm HFact}_{n}(\mathcal{A},T,\omega)$.
The following theorem confirms that they indeed endow the homotopy category with the structure
of a right triangulated category.

\begin{theorem} \label{the3.8}
Let $\nabla$ be the class of all sixtuples that are isomorphic to one of the form
\[
X \stackrel{[f]}{\longrightarrow}
Y \stackrel{[i_{f}]}{\longrightarrow}
C_{f} \stackrel{[\pi_{f}]}{\longrightarrow}
\Sigma X .
\]
Then the homotopy category
${\rm HFact}_{n}(\mathcal{A},T,\omega)$
of $n$-fold $(\mathcal{A},T)$-factorizations of $\omega$,
equipped with the suspension  functor $\Sigma$
and the class $\nabla$,
is a right triangulated category.
\end{theorem}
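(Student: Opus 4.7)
The plan is to verify each of the axioms (RTR1)--(RTR4) in turn, working throughout with the standard sixtuples $X \xrightarrow{[f]} Y \xrightarrow{[i_{f}]} C_{f} \xrightarrow{[\pi_{f}]} \Sigma X$ attached to honest morphisms of $n$-fold factorizations, and exploiting the invariance of mapping cones under homotopy established in the preceding remark. Axiom (RTR1)(a) is built into the definition of $\nabla$, and (RTR1)(b) is immediate: any class $[f]$ has a representative $f$, whose standard sixtuple lies in $\nabla$. For (RTR1)(c), closure under isomorphism reduces the problem to showing that $C_{1_{X}}$ is isomorphic to the zero object in ${\rm HFact}_{n}(\mathcal{A},T,\omega)$, equivalently that $1_{C_{1_{X}}}$ is null-homotopic. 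Taking the diagonal morphisms $s^{j} = \begin{bmatrix} 0 & 1 \\ 0 & 0 \end{bmatrix}$ at each position, the defining identities reduce to a direct block-matrix computation, with $T(1)=1$ handling the boundary term.

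For (RTR2), given a standard sixtuple $X \xrightarrow{f} Y \xrightarrow{i_{f}} C_{f} \xrightarrow{\pi_{f}} \Sigma X$, the goal is to compare the rotated sixtuple $Y \xrightarrow{i_{f}} C_{f} \xrightarrow{\pi_{f}} \Sigma X \xrightarrow{-\Sigma f} \Sigma Y$ with the standard sixtuple $Y \xrightarrow{i_{f}} C_{f} \xrightarrow{i_{i_{f}}} C_{i_{f}} \xrightarrow{\pi_{i_{f}}} \Sigma Y$. Since $C_{i_{f}}$ has the components of $\Sigma Y \oplus \Sigma X \oplus Y$ in each degree, the obvious projection $C_{i_{f}} \to \Sigma X$ and its section $\Sigma X \to C_{i_{f}}$ are mutually inverse homotopy equivalences: one composite equals the identity strictly, while the other is null-homotopic by an explicit diagonal built from the splittings. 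Checking that this isomorphism intertwines the connecting maps and produces the correct sign $-\Sigma f$ is then routine. For (RTR3), given $\alpha,\beta$ with $\beta \circ f_{1} \sim f_{2} \circ \alpha$ via diagonal morphisms $s=(s^{0},\dots,s^{n-1})$, set $\gamma\colon C_{f_{1}} \to C_{f_{2}}$ componentwise by
\[
\gamma^{j} = \begin{bmatrix} \alpha^{j+1} & 0 \\ s^{j} & \beta^{j} \end{bmatrix},
\]
with $\alpha^{n} := T(\alpha^{0})$ at the boundary. The equalities $\gamma \circ i_{f_{1}} = i_{f_{2}} \circ \beta$ and $\pi_{f_{2}} \circ \gamma = \Sigma\alpha \circ \pi_{f_{1}}$ follow from the block shape, while the compatibility of $\gamma$ with the differentials of the cones is precisely the homotopy relation defining $s$.

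The main obstacle is (RTR4). Using (RTR1)(a) we may assume $Z_{1}=C_{f_{1}}$, $Z_{2}=C_{\beta_{1} f_{1}}$, $Y_{3}=C_{\beta_{1}}$, so that all three input triangles are standard. Define
\[
\gamma_{1}^{j} = \begin{bmatrix} 1 & 0 \\ 0 & \beta_{1}^{j} \end{bmatrix}\colon (C_{f_{1}})^{j} \longrightarrow (C_{\beta_{1} f_{1}})^{j},
\qquad
\gamma_{2}^{j} = \begin{bmatrix} f_{1}^{j+1} & 0 \\ 0 & 1 \end{bmatrix}\colon (C_{\beta_{1} f_{1}})^{j} \longrightarrow (C_{\beta_{1}})^{j};
\]
both are strict morphisms of factorizations, and with these choices the full octahedral diagram commutes on the nose, with the connecting map $\Sigma(g_{1})\beta_{3}\colon C_{\beta_{1}} \to \Sigma C_{f_{1}}$ reading off directly from the block formulas. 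It then remains to show that the third column $C_{f_{1}} \xrightarrow{\gamma_{1}} C_{\beta_{1} f_{1}} \xrightarrow{\gamma_{2}} C_{\beta_{1}} \xrightarrow{\Sigma(g_{1})\beta_{3}} \Sigma C_{f_{1}}$ itself lies in $\nabla$, which reduces to producing, in ${\rm HFact}_{n}(\mathcal{A},T,\omega)$, a homotopy equivalence $C_{\gamma_{1}} \simeq C_{\beta_{1}}$ intertwining $[i_{\gamma_{1}}]$ with $[\gamma_{2}]$ and $[\pi_{\gamma_{1}}]$ with $[\Sigma(g_{1})\beta_{3}]$. Because the first block of $\gamma_{1}$ is the identity, $C_{\gamma_{1}}$ splits off a contractible direct summand of the shape of $C_{1_{\Sigma X_{1}}}$, while the orthogonal complement is naturally identified with $C_{\beta_{1}}$. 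Writing down explicit mutually inverse homotopy equivalences that realise this splitting, and verifying the two compatibility conditions with the surrounding maps at the level of $n$-fold factorizations, is the central calculation of the theorem.
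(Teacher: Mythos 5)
Your proposal follows essentially the same route as the paper's proof: the same homotopy $s^{j}=\left[\begin{smallmatrix}0&1\\0&0\end{smallmatrix}\right]$ for (RTR1)(c), the same cone morphism $\gamma^{j}=\left[\begin{smallmatrix}\alpha^{j+1}&0\\ s^{j}&\beta^{j}\end{smallmatrix}\right]$ for (RTR3), the same section/projection homotopy equivalence $\Sigma X\simeq C_{i_{f}}$ for (RTR2) (noting that the section must carry the extra entry $-f^{j+1}$ to be a genuine morphism of factorizations), and the same maps $\gamma_{1}=\left[\begin{smallmatrix}1&0\\0&\beta_{1}\end{smallmatrix}\right]$, $\gamma_{2}=\left[\begin{smallmatrix}f_{1}^{j+1}&0\\0&1\end{smallmatrix}\right]$ with the identification $C_{\gamma_{1}}\simeq C_{\beta_{1}}$ for (RTR4). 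The plan is correct and matches the paper's argument in all essentials.
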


\begin{proof}
We will check the axioms of right triangulated categories.

	{\bf (RTR1)} (a) By the definition of $\nabla$, it's easy to see that $\nabla$ is closed under isomorphisms in ${\rm HFact}_{n}(\mathcal{A},T,\omega)$.
	
	(b) Let $X,Y$ be in ${\rm Fact}_{n}(\mathcal{A},T,\omega)$ and $f=(f^{0},f^{1},\dots,f^{n-1}):X \rightarrow Y$ a morphism. That is, there is a commutative diagram:
$$\xymatrix{X^{0} \ar[r]^{d_{X}^{0}}\ar[d]^{f^{0}}&X^{1} \ar[r]^{d_{X}^{1}}\ar[d]^{f^{1}} &X^{2}
			\ar[r]\ar[d]^{f^{2}} &\cdots
			\ar[r]^{d_{X}^{n-2}} &X^{n-1} \ar[r]^{d_{X}^{n-1}}\ar[d]^{f^{n-1}} &T(X^{0}) \ar[d]^{T(f^{0})}\\
			Y^{0} \ar[r]^{d_{Y}^{0}}&Y^{1} \ar[r]^{d_{Y}^{1}} &Y^{2} \ar[r] &\cdots
			\ar[r]^{d_{Y}^{n-2}} &Y^{n-1} \ar[r]^{d_{Y}^{n-1}} &T(Y^{0}) . }$$

	Obviously, we can construct the mapping cone $C_{f}=\Sigma X \oplus Y$:
		\begin{center}
		$X^{1}\oplus Y^{0} \xrightarrow{d_{C_{f}}^{0}}
		X^{2}\oplus Y^{1} \xrightarrow{d_{C_{f}}^{1}} X^{3}\oplus Y^{2} \longrightarrow \cdots
		\longrightarrow T(X^{0})\oplus Y^{n-1} \xrightarrow{d_{C_{f}}^{n-1}} T(X^{1})\oplus T(Y^{0})$,
	\end{center}
	where $d_{C_{f}}^{j}=\begin{bmatrix}
		-d_{X}^{j+1} &0 \\
		f^{j+1}& d_{Y}^{j}
	\end{bmatrix}$ for any $0\leq j \leq n-2$ and $d_{C_{f}}^{n-1}=\begin{bmatrix}
		-T(d_{X}^{0}) &0 \\
		T(f^{0})& d_{Y}^{n-1}
	\end{bmatrix}$.
	Then we can get the sixtuple naturally
	$X \stackrel{[f]}{\longrightarrow} Y \stackrel{[i_{f}]}{\longrightarrow} C_{f} \stackrel{[\pi_{f}]}{\longrightarrow} \Sigma X$.

	(c) Let $X$ be an object of ${\rm Fact}_{n}(\mathcal{A},T,\omega)$.
In order to verify axiom {\rm (RTR1)(c)}, we consider the sixtuple
$$
X \xrightarrow{[\mathrm{Id}_{X}]}
X \xrightarrow{[i_{\mathrm{Id}_{X}}]}
C_{\mathrm{Id}_{X}} \xrightarrow{[\pi_{\mathrm{Id}_{X}}]}
\Sigma X
$$
in $\nabla$.  It remains to show that
$C_{\mathrm{Id}_{X}}$ is homotopic to the zero object of
${\rm Fact}_{n}(\mathcal{A},T,\omega)$.

Consider the diagram
$$
\xymatrix{
X^{1}\oplus X^{0} \ar[r]^{d_{C_{\mathrm{Id}_{X}}}^{0}}\ar[d]^{1}
    & X^{2}\oplus X^{1} \ar@{.>}[ld]_{s^{0}}
        \ar[r]^{d_{C_{\mathrm{Id}_{X}}}^{1}}\ar[d]^{1}
    & X^{3}\oplus X^{2} \ar@{.>}[ld]_{s^{1}}
        \ar[r]\ar[d]^{1}
    & \cdots \ar[r]
    & T(X^{0})\oplus X^{n-1} \ar@{.>}[ld]_{s^{\,n-2}}
        \ar[r]^{d_{C_{\mathrm{Id}_{X}}}^{\,n-1}}\ar[d]^{1}
    & T(X^{1})\oplus T(X^{0}) \ar@{.>}[ld]_{s^{\,n-1}}\ar[d]^{1} \\
X^{1}\oplus X^{0} \ar[r]^{d_{C_{\mathrm{Id}_{X}}}^{0}}
    & X^{2}\oplus X^{1} \ar[r]^{d_{C_{\mathrm{Id}_{X}}}^{1}}
    & X^{3}\oplus X^{2} \ar[r]
    & \cdots \ar[r]
    & T(X^{0})\oplus X^{n-1} \ar[r]^{d_{C_{\mathrm{Id}_{X}}}^{\,n-1}}
    & T(X^{1})\oplus T(X^{0}) .
}
$$

Define
$$
s=(s^{0},s^{1},\dots,s^{n-1}), \qquad
s^{j} =
\begin{bmatrix}
0 & 1 \\[2pt]
0 & 0
\end{bmatrix}
\quad (0 \le j \le n-1).
$$
A direct computation shows that
$$
d_{C_{\mathrm{Id}_{X}}}^{\,n-1} \circ s^{\,n-1}
    + T(s^{0}\circ d_{C_{\mathrm{Id}_{X}}}^{0})
    =
\begin{bmatrix}
1 & 0 \\[2pt]
0 & 1
\end{bmatrix},
$$
and for every $1 \le j \le n-1$,
$$
d_{C_{\mathrm{Id}_{X}}}^{\,j-1} \circ s^{\,j-1}
    + s^{j}\circ d_{C_{\mathrm{Id}_{X}}}^{j}
    =
\begin{bmatrix}
1 & 0 \\[2pt]
0 & 1
\end{bmatrix}.
$$
From the identities established above,
it follows that $C_{\mathrm{Id}_{X}}$ is homotopic to the zero object.
Consequently, the sixtuple
$$
X \xrightarrow{[\mathrm{Id}_{X}]} X \longrightarrow 0 \longrightarrow \Sigma X
$$
belongs to $\nabla$.
\vspace{2mm}

{\bf (RTR2)}
Let
$$
X \stackrel{[f]}{\longrightarrow}
Y \stackrel{[i_{f}]}{\longrightarrow}
C_{f} \stackrel{[\pi_{f}]}{\longrightarrow}
\Sigma X
$$
be a sixtuple in $\nabla$.
To verify axiom {\rm (RTR2)}, we must show that the rotated sixtuple
$$
Y \stackrel{[i_{f}]}{\longrightarrow}
C_{f} \stackrel{[\pi_{f}]}{\longrightarrow}
\Sigma X \stackrel{-\Sigma [f]}{\longrightarrow} \Sigma Y
$$
also belongs to $\nabla$.
Equivalently, we need to prove that it is homotopic to the canonical cone sixtuple
$$
Y \stackrel{[i_{f}]}{\longrightarrow}
C_{f} \stackrel{[i_{i_{f}}]}{\longrightarrow}
C_{i_{f}} \stackrel{[\pi_{i_{f}}]}{\longrightarrow}
\Sigma Y .
$$

To this end, we proceed in three steps.

\begin{itemize}
    \item[(i)]
    Define a morphism
    $
    \alpha\colon \Sigma X \longrightarrow C_{i_{f}}
    $
    in ${\rm Fact}_{n}(\mathcal{A},T,\omega)$.

    \item[(ii)]
    Show that $\alpha$ becomes an isomorphism in the homotopy category
    ${\rm HFact}_{n}(\mathcal{A},T,\omega)$.

    \item[(iii)]
    Verify that $\alpha$ fits into a morphism of sixtuples, producing a commutative diagram
    $$
    \xymatrix{
    Y \ar[r]^{[i_{f}]} \ar@{=}[d]
        & C_{f} \ar[r]^{[\pi_{f}]} \ar@{=}[d]
        & \Sigma X \ar[r]^{-\,\Sigma[f]} \ar[d]^{[\alpha]}
        & \Sigma Y \ar@{=}[d] \\
    Y \ar[r]^{[i_{f}]}
        & C_{f} \ar[r]^{[i_{i_{f}}]}
        & C_{i_{f}} \ar[r]^{[\pi_{i_{f}}]}
        & \Sigma Y ,
    }
    $$
    thereby demonstrating that the rotated sixtuple lies in $\nabla$.
\end{itemize}

	(i) Define $\alpha=(\alpha^{0},\alpha^{1},\dots,\alpha^{n-1})=(\begin{bmatrix}
		-f^{1}\\ 1 \\ 0
	\end{bmatrix},\begin{bmatrix}
		-f^{2}\\ 1 \\ 0
	\end{bmatrix},\dots,\begin{bmatrix}
		-T(f^{0})\\ 1 \\ 0
	\end{bmatrix})\colon\Sigma X \rightarrow C_{i_{f}}$. Consider the following diagram:
$$\small \xymatrix{X^{1}\ar[r]^{-d_{X}^{1}} \ar[d]^{\alpha^{0}} &X^{2} \ar[r]^{-d_{X}^{2}}\ar[d]^{\alpha^{1}} &\cdots \ar[r]&T(X^{0})\ar[r]^{-T(d_{X}^{0})} \ar[d]^{\alpha^{n-1}} &T(X^{1}) \ar[d]^{T(\alpha^{0})}\\
			Y^{1}\oplus X^{1} \oplus Y^{0}\ar[r]^{d_{C_{i_{f}}}^{0}} &Y^{2}\oplus X^{2} \oplus Y^{1}\ar[r]^(.65){d_{C_{i_{f}}}^{1}} &\cdots \ar[r]&T(Y^{0})\oplus T(X^{0}) \oplus Y^{n-1}\ar[r]^{d_{C_{i_{f}}}^{n-1}}&T(Y^{1})\oplus T(X^{1}) \oplus T(Y^{0}), }$$
where
\[
d_{C_{i_{f}}}^{j}=\begin{bmatrix}
		-d_{Y}^{j+1} &0 &0\\
		0 &-d_{X}^{j+1} &0\\
		1 &f^{j+1} &d_{Y}^{j}
	\end{bmatrix}
\quad (0 \le j \le n-2),
\qquad
d_{C_{i_{f}}}^{n-1}=\begin{bmatrix}
		-T(d_{Y}^{0}) &0 &0\\
		0 &-T(d_{X}^{0}) &0\\
		1 &T(f^{0}) &d_{Y}^{n-1}
	\end{bmatrix}.
\]

It is easy to verify that each square is commutative. Therefore, the definition is well defined.

  (ii) Define $\beta=(\beta^{0},\beta^{1},\dots,\beta^{n-1})\colon C_{i_{f}} \rightarrow \Sigma X$, where $\beta^{j}=[0\ 1\ 0]$ for any $0\leq j \leq n-1$. Consider the following diagram:
$$\footnotesize \xymatrix{Y^{1}\oplus X^{1} \oplus Y^{0}\ar[r]^{d_{C_{i_{f}}}^{0}} \ar[d]^{\beta^{0}} &Y^{2}\oplus X^{2} \oplus Y^{1} \ar[r]^(.65){d_{C_{i_{f}}}^{1}}\ar[d]^{\beta^{1}} &\cdots \ar[r]&T(Y^{0})\oplus T(X^{0}) \oplus Y^{n-1}\ar[r]^{d_{C_{i_{f}}}^{n-1}} \ar[d]^{\beta^{n-1}} &T(Y^{1})\oplus T(X^{1}) \oplus T(Y^{0}) \ar[d]^{T(\beta^{0})}\\
			X^{1}\ar[r]^{-d_{X}^{1}} &X^{2}\ar[r]^{-d_{X}^{2}} &\cdots \ar[r]&T(X^{0})\ar[r]^{-T(d_{X}^{0})}&T(X^{1}).}$$	
It is easy to verify that each square is commutative.
Hence the definition of $\beta$ is well defined.
Obviously, $\beta\alpha=1_{\Sigma X}$. Next, we shall verify
$\alpha\beta \sim 1_{C_{i_{f}}}$. Let $$h=1_{C_{i_{f}}}-\alpha\beta=(h^{0},h^{1},\dots,h^{n-1})\colon C_{i_{f}} \rightarrow C_{i_{f}},$$ where $h^{j}=\begin{bmatrix}
		1 &f^{j+1} &0\\
		0 &0 &0\\
		0 &0 &1
	\end{bmatrix}$ for any $0 \leq j \leq n-2$ and $h^{n-1}=\begin{bmatrix}
		1 &T(f^{0}) &0\\
		0 &0 &0\\
		0 &0 &1
	\end{bmatrix}$. Consider the following diagram
$$
\footnotesize
\xymatrix{
Y^{1}\oplus X^{1}\oplus Y^{0}
    \ar[r]^{d_{C_{i_{f}}}^{0}} \ar[d]^{h^{0}}
    & Y^{2}\oplus X^{2}\oplus Y^{1}
        \ar[r]^(.65){d_{C_{i_{f}}}^{1}}\ar[d]^{h^{1}}
        \ar@{.>}[ld]_{s^{0}}
    & \cdots \ar[r]
    & T(Y^{0})\oplus T(X^{0})\oplus Y^{n-1}
        \ar[r]^{d_{C_{i_{f}}}^{n-1}}\ar[d]^{h^{\,n-1}}
        \ar@{.>}[ld]_{s^{\,n-2}}
    & T(Y^{1})\oplus T(X^{1})\oplus T(Y^{0})
        \ar[d]^{T(h^{0})}
        \ar@{.>}[ld]_{s^{\,n-1}} \\
Y^{1}\oplus X^{1}\oplus Y^{0}
    \ar[r]^{d_{C_{i_{f}}}^{0}}
    & Y^{2}\oplus X^{2}\oplus Y^{1}
        \ar[r]^(.65){d_{C_{i_{f}}}^{1}}
    & \cdots \ar[r]
    & T(Y^{0})\oplus T(X^{0})\oplus Y^{n-1}
        \ar[r]^{d_{C_{i_{f}}}^{n-1}}
    & T(Y^{1})\oplus T(X^{1})\oplus T(Y^{0}).
}
$$
Define
$$
s = (s^{0}, s^{1}, \dots, s^{n-1}),
~~
s^{j} =
\begin{bmatrix}
0 & 0 & 1 \\[2pt]
0 & 0 & 0 \\[2pt]
0 & 0 & 0
\end{bmatrix}
\quad (0 \le j \le n-1).
$$
A direct computation gives
$$
d_{C_{i_{f}}}^{\,n-1} \circ s^{\,n-1}
    + T(s^{0} \circ d_{C_{i_{f}}}^{0})
    = T(h^{0}),
$$
and for every $1 \le j \le n-1$,
$$
d_{C_{i_{f}}}^{\,j-1} \circ s^{\,j-1}
    + s^{j} \circ d_{C_{i_{f}}}^{j}
    = h^{j}.
$$
From these identities it follows that $\alpha$ is an isomorphism in
${\rm HFact}_{n}(\mathcal{A},T,\omega)$,
and $\beta$ serves as its inverse.

	(\romannumeral3) Consider the following diagram
$$\xymatrix{Y\ar[r]^{\text{[$i_{f}$]}} \ar@{=}[d]&C_{f} \ar[r]^{\text{[$\pi_{f}$]}} \ar@{=}[d] &\Sigma X \ar[r]^{-\Sigma \text{[$f$]}} \ar@<0.7ex>@{.>}[d]^{\text{[$\alpha$]}} &\Sigma Y \ar@{=}[d]\\
Y\ar[r]^{\text{[$i_{f}$]}} &C_{f} \ar[r]^{\text{[$i_{i_{f}}$]}} &C_{i_{f}} \ar[r]^{\text{[$\pi_{i_{f}}$]}} \ar@<0.7ex>@{.>}[u]^{\text{[$\beta$]}} &\Sigma Y.}$$
It's easy to verify that $\pi_{i_{f}}\circ \alpha=-\Sigma f$. Now, we just need to verify that $i_{i_{f}} \sim \alpha \circ \pi _{f}$.
	Let $$h=i_{i_{f}}-\alpha \circ \pi _{f}=(h^{0},h^{1},\dots,h^{n-1}):C_{f} \rightarrow C_{i_{f}},$$ where $h^{j}=\begin{bmatrix}
		f^{j+1} &0 \\
		0 &0 \\
		0 &1
	\end{bmatrix}$ for any $0 \leq j \leq n-2$ and $h^{n-1}=\begin{bmatrix}
		T(f^{0}) &0 \\
		0 &0 \\
		0 &1
	\end{bmatrix}$. For the following diagram
$$\footnotesize \xymatrix{X^{1} \oplus Y^{0}\ar[r]^{d_{C_{f}}^{0}} \ar[d]^{h^{0}} &X^{2} \oplus Y^{1} \ar[r]^(.65){d_{C_{f}}^{1}}\ar[d]^{h^{1}} \ar@{.>}[ld]_{s^{0}} &\cdots \ar[r]&T(X^{0}) \oplus Y^{n-1}\ar[r]^{d_{C_{f}}^{n-1}} \ar[d]^{h^{n-1}} \ar@{.>}[ld]_{s^{n-2}}&T(X^{1}) \oplus T(Y^{0}) \ar[d]^{T(h^{0})} \ar@{.>}[ld]_{s^{n-1}}\\
	Y^{1}\oplus X^{1}\oplus Y^{0}\ar[r]^{d_{C_{i_{f}}}^{0}} &Y^{2}\oplus X^{2} \oplus Y^{1}\ar[r]^(.65){d_{C_{i_{f}}}^{1}} &\cdots \ar[r]&T(Y^{0})\oplus T(X^{0}) \oplus Y^{n-1}\ar[r]^{d_{C_{i_{f}}}^{n-1}}&T(Y^{1})\oplus T(X^{1}) \oplus T(Y^{0})  ,}$$
where 
$$d_{C_{f}}^{j}=\begin{bmatrix}
		-d_{X}^{j+1} &0\\
		f^{j+1} &d_{Y}^{j}
	\end{bmatrix},~~ d_{C_{i_{f}}}^{j}=\begin{bmatrix}
		-d_{Y}^{j+1} &0 &0\\
		0 &-d_{X}^{j+1} &0\\
		1 &f^{j+1} &d_{Y}^{j}
	\end{bmatrix} \ (0\leq j \leq n-2),$$
$$d_{C_{f}}^{n-1}=\begin{bmatrix}
		-T(d_{X}^{0}) &0\\
		T(f^{0}) &d_{Y}^{n-1}
	\end{bmatrix},~~d_{C_{i_{f}}}^{n-1}=\begin{bmatrix}
		-T(d_{Y}^{0}) &0 &0\\
		0 &-T(d_{X}^{0}) &0\\
		1 &T(f^{0}) &d_{Y}^{n-1}
	\end{bmatrix}.$$
	
	Let $s=(s^{0},s^{1},\dots,s^{n-1})$ with $s^{j}=\begin{bmatrix}
		0 &1\\
		0 &0\\
		0 &0
	\end{bmatrix}$ for any $0 \leq j \leq n-1$. We can verify that
	$$d_{C_{i_{f}}}^{n-1}\circ s^{n-1}+T(s^{0}\circ d_{C_{f}}^{0})=T(h^{0})~~\mbox{and}~~
	d_{C_{i_{f}}}^{j-1}\circ s^{j-1}+s^{j}\circ d_{C_{f}}^{j}=h^{j}~~\mbox{for}~~1\leq j \leq n-1.$$
	Therefore, the morphism $\alpha$ induces the morphism
$([1_{Y}],[1_{C_{f}}],[\alpha])$ of sixtuples.

	{\bf (RTR3)} Let $\alpha\colon X\rightarrow X_{1},\beta\colon Y\rightarrow Y_{1}$ be morphisms in ${\rm Fact}_{n}(\mathcal{A},T,\omega)$. Then we obtain two commutative diagrams
$$\xymatrix{X^{0} \ar[r]^{d_{X}^{0}}\ar[d]^{\alpha^{0}}&X^{1} \ar[r]^{d_{X}^{1}}\ar[d]^{\alpha^{1}} &X^{2}
		\ar[r]\ar[d]^{\alpha^{2}} &\cdots
		\ar[r]^{d_{X}^{n-2}} &X^{n-1} \ar[r]^{d_{X}^{n-1}}\ar[d]^{\alpha^{n-1}} &T(X^{0}) \ar[d]^{T(\alpha^{0})}\\
		X_{1}^{0} \ar[r]^{d_{X_{1}}^{0}}&X_{1}^{1} \ar[r]^{d_{X_{1}}^{1}} &X_{1}^{2} \ar[r] &\cdots
		\ar[r]^{d_{X_{1}}^{n-2}} &X_{1}^{n-1} \ar[r]^{d_{X_{1}}^{n-1}} &T(X_{1}^{0}),}$$
$$\xymatrix{Y^{0} \ar[r]^{d_{Y}^{0}}\ar[d]^{\beta^{0}}&Y^{1} \ar[r]^{d_{Y}^{1}}\ar[d]^{\beta^{1}} &Y^{2}
		\ar[r]\ar[d]^{\beta^{2}} &\cdots
		\ar[r]^{d_{Y}^{n-2}} &Y^{n-1} \ar[r]^{d_{Y}^{n-1}}\ar[d]^{\beta^{n-1}} &T(Y^{0}) \ar[d]^{T(\beta^{0})}\\
		Y_{1}^{0} \ar[r]^{d_{Y_{1}}^{0}}&Y_{1}^{1} \ar[r]^{d_{Y_{1}}^{1}} &Y_{1}^{2} \ar[r] &\cdots
		\ar[r]^{d_{Y_{1}}^{n-2}} &Y_{1}^{n-1} \ar[r]^{d_{Y_{1}}^{n-1}} &T(Y_{1}^{0}) .}$$	
Consider the following diagram
$$\xymatrix{X\ar[r]^{\text{[$f$]}} \ar[d]^{\text{[$\alpha$]}}&Y \ar[r]^{\text{[$i_{f}$]}} \ar[d]^{\text{[$\beta$]}} &C_{f} \ar[r]^{\text{[$\pi_{f}$]}} \ar@{.>}[d]^{\text{[$\gamma$]}} &\Sigma X \ar[d]^{\Sigma \text{[$\alpha$]}}\\
	X_{1}\ar[r]^{\text{[$g$]}} &Y_{1} \ar[r]^{\text{[$i_{g}$]}} &C_{g} \ar[r]^{\text{[$\pi_{g}$]}} &\Sigma X_{1}.}$$
Assume that the left square commutes in
${\rm HFact}_{n}(\mathcal{A},T,\omega)$.
We must show that there exists a morphism
$\gamma : C_{f} \rightarrow C_{g}$
rendering the two remaining squares on the right commutative in
${\rm HFact}_{n}(\mathcal{A},T,\omega)$.
For $0 \le j \le n-1$, set
$$h^{j} = \beta^{j} f^{j} - g^{j} \alpha^{j}.
$$ By assumption, the equality $[g]\circ[\alpha] = [\beta]\circ[f]$ holds in the homotopy category, and therefore there exists a sequence
$s = (s^{0}, s^{1}, \dots, s^{n-1})$ making the diagram
$$
\xymatrix{
X^{0} \ar[r]^{d_{X}^{0}}\ar[d]_{h^{0}}
    & X^{1} \ar@{.>}[ld]_{s^{0}} \ar[r]^{d_{X}^{1}} \ar[d]_{h^{1}}
    & X^{2} \ar@{.>}[ld]_{s^{1}} \ar[r] \ar[d]_{h^{2}}
    & \cdots \ar[r]^{d_{X}^{n-2}}
    & X^{n-1} \ar@{.>}[ld]_{s^{n-2}} \ar[r]^{d_{X}^{n-1}} \ar[d]_{h^{n-1}}
    & T(X^{0}) \ar@{.>}[ld]_{s^{n-1}} \ar[d]_{T(h^{0})} \\
Y_{1}^{0} \ar[r]^{d_{Y_{1}}^{0}}
    & Y_{1}^{1} \ar[r]^{d_{Y_{1}}^{1}}
    & Y_{1}^{2} \ar[r]
    & \cdots \ar[r]^{d_{Y_{1}}^{n-2}}
    & Y_{1}^{n-1} \ar[r]^{d_{Y_{1}}^{n-1}}
    & T(Y_{1}^{0})
}
$$
commutative.
Explicitly, the homotopy relations give:
$$
d_{Y_{1}}^{\,n-1} \circ s^{\,n-1} + T(s^{0} \circ d_{X}^{0}) = T(h^{0}),
$$
and for every $1 \le j \le n-1$,
$$
d_{Y_{1}}^{\,j-1} \circ s^{\,j-1} + s^{j} \circ d_{X}^{j} = h^{j}.
$$
Define a morphism
$$
\gamma = (\gamma^{0}, \gamma^{1}, \dots, \gamma^{n-1}) : C_{f} \rightarrow C_{g},
$$
where
$$
\gamma^{j} =
\begin{bmatrix}
\alpha^{\,j+1} & 0 \\
s^{\,j}        & \beta^{\,j}
\end{bmatrix}
\quad (0 \le j \le n-2),
~~
\gamma^{\,n-1} =
\begin{bmatrix}
T(\alpha^{0}) & 0 \\
s^{\,n-1}     & \beta^{\,n-1}
\end{bmatrix}.
$$
One checks directly that these formulas are compatible with the defining
diagrams and homotopy relations above.
Moreover,
$$
\gamma \circ i_{f} = i_{g} \circ \beta,
~~
\pi_{g} \circ \gamma = \Sigma(\alpha) \circ \pi_{f},
$$
so the two squares on the right commute in
${\rm HFact}_{n}(\mathcal{A},T,\omega)$.
Thus, axiom {\rm (RTR3)} is satisfied.

{(\bf RTR4)}
Let $f\colon X \rightarrow Y$ and $g\colon Y \rightarrow Z$ be morphisms in
${\rm Fact}_{n}(\mathcal{A},T,\omega)$. Then we obtain the commutative diagram
$$
\xymatrix{
X^{0} \ar[r]^{d_{X}^{0}}\ar[d]^{f^{0}}
    &X^{1} \ar[r]^{d_{X}^{1}}\ar[d]^{f^{1}}
    &X^{2} \ar[r]\ar[d]^{f^{2}}
    &\cdots \ar[r]^{d_{X}^{n-2}}
    &X^{n-1} \ar[r]^{d_{X}^{n-1}}\ar[d]^{f^{n-1}}
    &T(X^{0}) \ar[d]^{T(f^{0})} \\
Y^{0} \ar[r]^{d_{Y}^{0}}\ar[d]^{g^{0}}
    &Y^{1} \ar[r]^{d_{Y}^{1}}\ar[d]^{g^{1}}
    &Y^{2} \ar[r]\ar[d]^{g^{2}}
    &\cdots \ar[r]^{d_{Y}^{n-2}}
    &Y^{n-1} \ar[r]^{d_{Y}^{n-1}}\ar[d]^{g^{n-1}}
    &T(Y^{0}) \ar[d]^{T(g^{0})} \\
Z^{0} \ar[r]^{d_{Z}^{0}}
    &Z^{1} \ar[r]^{d_{Z}^{1}}
    &Z^{2} \ar[r]
    &\cdots \ar[r]
    &Z^{n-1} \ar[r]^{d_{Z}^{n-1}}
    &T(Z^{0}).
}
$$

Now consider the diagram in ${\rm HFact}_{n}(\mathcal{A},T,\omega)$
$$
\xymatrix{
X \ar[r]^{[f]} \ar@{=}[d]
    &Y \ar[r]^{[i_{f}]} \ar[d]^{[g]}
    &C_{f} \ar[r]^{[\pi_{f}]} \ar@{.>}[d]^{[\alpha]}
    &\Sigma X \ar@{=}[d] \\
X \ar[r]^{[gf]}
    &Z \ar[r]^{[i_{gf}]} \ar[d]^{[i_{g}]}
    &C_{gf} \ar[r]^{[\pi_{gf}]} \ar@{.>}[d]^{[\beta]}
    &\Sigma X \ar[d]^{\Sigma [f]} \\
& C_{g} \ar@{=}[r] \ar[d]^{[\pi_{g}]}
    &C_{g} \ar[r]^{[\pi_{g}]} \ar[d]^{[\gamma]}
    &\Sigma Y \\
&\Sigma Y \ar[r]^{\Sigma[i_{f}]}
    &\Sigma C_{f}
}
$$
where the first two rows and the second column lie in $\nabla$ and $\gamma = \Sigma(i_{f}) \circ \pi_{g}$.
To obtain the third column in $\nabla$ as required by axiom (RTR4), it suffices to carry out the following three steps.
\begin{itemize}
\item[(i)] \textbf{Define $\alpha$ and $\beta$.}
Construct morphisms
$$
\alpha\colon C_{f} \to C_{gf},
\qquad
\beta\colon C_{gf} \to C_{g},
$$
componentwise so that they extend the commutativity already imposed by $f$, $g$, and $gf$.

\item[(ii)] \textbf{Verify commutativity in ${\rm HFact}_{n}(\mathcal{A},T,\omega)$.}
One checks that the equalities
$$
[\alpha]\circ[i_{f}] = [i_{gf}]\circ[g],\qquad
[\beta]\circ[i_{gf}] = [i_{g}],\qquad
[\pi_{gf}] \circ [\alpha]=[\pi_{f}],\qquad \Sigma[f]\circ[\pi_{gf}] = [\pi_{g}] \circ [\beta]
$$
all hold up to homotopy.
Thus the entire diagram above is commutative in the homotopy category.

\item[(iii)] \textbf{Show that the lower row is a sixtuple.}
With $
\gamma = \Sigma(i_{f}) \circ \pi_{g},
$ the sixtuple
$$
C_{f} \xrightarrow{[\alpha]} C_{gf}
    \xrightarrow{[\beta]} C_{g}
    \xrightarrow{[\gamma]} \Sigma C_{f}
$$
is isomorphic to the mapping-cone sixtuple associated to $gf$.
Hence it belongs to $\nabla$.
\end{itemize}

	(i) Let $\alpha=(\alpha^{0},\alpha^{1},\dots,\alpha^{n-1})=(\setlength{\arraycolsep}{3pt}\begin{bmatrix} 1&0\\0&g^{0} \end{bmatrix},\begin{bmatrix} 1&0\\0&g^{1} \end{bmatrix},\dots,\begin{bmatrix} 1&0\\0&g^{n-1} \end{bmatrix})\colon C_{f} \rightarrow C_{gf},$
$$\beta=(\beta^{0},\beta^{1},\dots,\\ \beta^{n-1})=(\begin{bmatrix} f^{1}&0\\0&1 \end{bmatrix},\begin{bmatrix} f^{2}&0\\0&1 \end{bmatrix},\dots,\begin{bmatrix} T(f^{0})&0\\0&1 \end{bmatrix})\colon C_{gf} \rightarrow C_{g}$$ be morphisms. More precisely, we can prove that the following diagram commutes
$$\xymatrix{X^{1} \oplus Y^{0} \ar[r]^{\alpha^{0}} \ar[d]^{d_{C_{f}}^{0}} &X^{1} \oplus Z^{0} \ar[r]^{\beta^{0}} \ar[d]^{d_{C_{gf}}^{0}} &Y^{1} \oplus Z^{0}  \ar[d]^{d_{C_{g}}^{0}}\\
			X^{2} \oplus Y^{1} \ar[r]^{\alpha^{1}} \ar[d]^{d_{C_{f}}^{1}} &X^{2} \oplus Z^{1} \ar[r]^{\beta^{1}} \ar[d]^{d_{C_{gf}}^{1}} &Y^{2} \oplus Z^{1}  \ar[d]^{d_{C_{g}}^{1}} \\
			X^{3} \oplus Y^{2} \ar[r]^{\alpha^{2}} \ar[d]^{d_{C_{f}}^{2}} &X^{3} \oplus Z^{2} \ar[r]^{\beta^{2}} \ar[d]^{d_{C_{gf}}^{2}} &Y^{3} \oplus Z^{2}  \ar[d]^{d_{C_{g}}^{2}} &\\
			\vdots \ar[d] & \vdots \ar[d]& \vdots \ar[d]\\
			X^{n-1} \oplus Y^{n-2} \ar[r]^{\alpha^{n-2}} \ar[d]^{d_{C_{f}}^{n-2}} &X^{n-1} \oplus Z^{n-2} \ar[r]^{\beta^{n-2}} \ar[d]^{d_{C_{gf}}^{n-2}} &Y^{n-1} \oplus Z^{n-2}  \ar[d]^{d_{C_{g}}^{n-2}} \\
			T(X^{0}) \oplus Y^{n-1} \ar[r]^{\alpha^{n-1}} \ar[d]^{d_{C_{f}}^{n-1}} &T(X^{0}) \oplus Z^{n-1} \ar[r]^{\beta^{n-1}} \ar[d]^{d_{C_{gf}}^{n-1}} &T(Y^{0}) \oplus Z^{n-1}  \ar[d]^{d_{C_{g}}^{n-1}} \\
			T(X^{1}) \oplus T(Y^{0}) \ar[r]^{T(\alpha^{0})} &T(X^{1}) \oplus T(Z^{0}) \ar[r]^{T(\beta^{0})}  &T(Y^{1}) \oplus T(Z^{0}).}$$

\hspace{-4mm}Each square in the diagram above is commutative. Hence the definitions are well defined.

	(ii) By the constructions of $\alpha,\beta$, it's easy to verify the commutativity of the original diagram.
	
	(iii) We need to show that there exists an isomorphism $\sigma:C_{g} \rightarrow C_{\alpha}$ which making the following diagram commutative
$$\xymatrix{C_{f}\ar[r]^{\text{[$\alpha$]}} \ar@{=}[d]&C_{gf} \ar[r]^{\text{[$\beta$]}} \ar@{=}[d] &C_{g} \arrow[r]^{\text{[$\gamma$]}} \ar@<0.7ex>@{.>}[d]^{\text{[$\sigma$]}} &\Sigma C_{f} \ar@{=}[d]\\
			C_{f}\ar[r]^{\text{[$\alpha$]}} &C_{gf} \ar[r]^{\text{[$i_{\alpha}$]}} &C_{\alpha} \ar[r]^{\text{[$\pi_{\alpha}$]}} \ar@<0.7ex>@{.>}[u]^{\text{[$\tau$]}}&\Sigma C_{f}  .}$$

	Define morphisms
$$\sigma=(\sigma^{0},\sigma^{1},\dots,\sigma^{n-1}):C_{g} \rightarrow C_{\alpha},$$
	$$\tau=(\tau^{0},\tau^{1},\dots,\tau^{n-1}):C_{\alpha} \rightarrow C_{g},$$
where
$$\sigma^{j}=\begin{bmatrix}
		0 &0 \\1 &0\\ 0 &0 \\0 &1
	\end{bmatrix} (0\leq j \leq n-1),\ \tau^{j}=\begin{bmatrix}
		0 &1 &f^{j+1} &0\\ 0 &0 &0 &1
	\end{bmatrix} (0\leq j \leq n-2), \ \tau^{n-1}=\begin{bmatrix}
		0 &1 &T(f^{0}) &0\\ 0 &0 &0 &1
	\end{bmatrix}.$$
It is easy to verify that the definitions are well defined. Additionally,
	it's obvious that $\tau \sigma=1_{C_{g}}$. Next, we just need to show that $\sigma \tau\sim 1_{C_{\alpha}}$. Let
	$$h=1_{C_{\alpha}}-\sigma \tau=(h^{0},h^{1},\dots,h^{n-1}):C_{\alpha} \rightarrow C_{\alpha},$$
 where
 $$h^{j}=\begin{bmatrix}
		1&0&0&0\\ 0&0&-f^{j+1}&0\\ 0&0&1&0\\ 0&0&0&0
	\end{bmatrix}\ (0 \leq j \leq n-2),\qquad h^{n-1}=\begin{bmatrix}
		1&0&0&0\\ 0&0&-T(f^{0})&0\\ 0&0&1&0\\ 0&0&0&0
	\end{bmatrix}.$$
Consider the following diagram
$$\small \xymatrix{X^{2} \oplus Y^{1}\oplus X^{1} \oplus Z^{0}\ar[r]^{h^{0}} \ar[d]_{d_{C_{\alpha}}^{0}} &X^{2} \oplus Y^{1}\oplus X^{1} \oplus Z^{0}\ar[d]^{d_{C_{\alpha}}^{0}}\\
			X^{3} \oplus Y^{2}\oplus X^{2} \oplus Z^{1}\ar[r]^{h^{1}} \ar[d]_{d_{C_{\alpha}}^{1}} \ar@{.>}[ur]^{s^{0}}&X^{3} \oplus Y^{2}\oplus X^{2} \oplus Z^{1}\ar[d]^{d_{C_{\alpha}}^{1}}\\
			X^{4} \oplus Y^{3}\oplus X^{3} \oplus Z^{2}\ar[r]^{h^{2}} \ar[d]_{d_{C_{\alpha}}^{2}} \ar@{.>}[ur]^{s^{1}}&X^{4} \oplus Y^{3}\oplus X^{3} \oplus Z^{2}\ar[d]^{d_{C_{\alpha}}^{2}}\\
			\vdots \ar[d]&\vdots \ar[d]\\
			T(X^{0})\oplus Y^{n-1}\oplus X^{n-1} \oplus Z^{n-2}\ar[r]^{h^{n-2}} \ar[d]_{d_{C_{\alpha}}^{n-2}} &T(X^{0})\oplus Y^{n-1}\oplus X^{n-1} \oplus Z^{n-2}\ar[d]_{d_{C_{\alpha}}^{n-2}}\\
			T(X^{1})\oplus T(Y^{0})\oplus T(X^{0}) \oplus Z^{n-1}\ar[r]^{h^{n-1}} \ar[d]_{d_{C_{\alpha}}^{n-1}} \ar@{.>}[ur]^{s^{n-2}}&T(X^{1})\oplus T(Y^{0})\oplus T(X^{0}) \oplus Z^{n-1}\ar[d]^{d_{C_{\alpha}}^{n-1}}\\
			T(X^{2})\oplus T(Y^{1})\oplus T(X^{1}) \oplus T(Z^{0})\ar[r]^{T(h^{0})}  \ar@{.>}[ur]^{s^{n-1}}&T(X^{2})\oplus T(Y^{1})\oplus T(X^{1}) \oplus T(Z^{0})   ,}$$
where
$$\small \hspace{20mm}d_{C_{\alpha}}^{j}=\begin{bmatrix}
		d_{X}^{j+2} &0 &0 &0\\
		-f^{j+2} &-d_{Y}^{j+1} &0 &0\\
		1 &0 &-d_{X}^{j+1} &0\\
		0 &g^{j+1} &g^{j+1}f^{j+1} &d_{Z}^{j}
	\end{bmatrix}\ (0\leq j \leq n-3),$$
	$$\small d_{C_{\alpha}}^{n-2}=\begin{bmatrix}
		T(d_{X}^{0}) &0 &0 &0\\
		-T(f^{0}) &-d_{Y}^{n-1} &0 &0\\
		1 &0 &-d_{X}^{n-1} &0\\
		0 &g^{n-1} &g^{n-1}f^{n-1} &d_{Z}^{n-2}
	\end{bmatrix},$$
    $$\small d_{C_{\alpha}}^{n-1}=\begin{bmatrix}
		T(d_{X}^{1}) &0 &0 &0\\
		-T(f^{1}) &-T(d_{Y}^{0}) &0 &0\\
		1 &0 &-T(d_{X}^{0}) &0\\
		0 &T(g^{0}) &T(g^{0}f^{0}) &d_{Z}^{n-1}
	\end{bmatrix}.$$

	Let $s=(s^{0},s^{1},\dots,s^{n-1})$ with $s^{j}=\begin{bmatrix}
		0 &0 &1 &0\\
		0 &0 &0 &0\\
		0 &0 &0 &0\\
		0 &0 &0 &0
	\end{bmatrix}$ for any $0 \leq j \leq n-1$. We can verify
	$d_{C_{\alpha}}^{n-1}\circ s^{n-1}+T(s^{0}\circ d_{C_{\alpha}}^{0})=T(h^{0})$ and
	$d_{C_{\alpha}}^{j-1}\circ s^{j-1}+s^{j}\circ d_{C_{\alpha}}^{j}=h^{j}$ for $1\leq j \leq n-1$.
	This shows that $\sigma \tau \sim 1_{C_{\alpha}}$, $\sigma$ is an isomorphism in ${\rm HFact}_{n}(\mathcal{A},T,\omega)$ and it's inverse is $\tau$.

	Finally, we need to prove that $\sigma$ induces the morphism between sixtuples. It's easy to see that $\pi_{\alpha}\circ \sigma=\gamma$. So we just need to prove $i_{\alpha}\sim \sigma \beta$. Let
	$$h=i_{\alpha}- \sigma \beta=(h^{0},h^{1},\dots,h^{n-1}):C_{gf} \rightarrow C_{\alpha},$$
 where
$$\small h^{j}=\begin{bmatrix}
		0&0\\ -f^{j+1}&0\\ 1&0\\ 0&0
	\end{bmatrix}\ (0 \leq j \leq n-2),\qquad h^{n-1}=\begin{bmatrix}
		0&0\\ -T(f^{0})&0\\ 1&0\\ 0&0
	\end{bmatrix}.$$
Consider the following diagram
$$\small \xymatrix{X^{1} \oplus Z^{0}\ar[r]^{h^{0}} \ar[d]_{d_{C_{gf}}^{0}} &X^{2} \oplus Y^{1}\oplus X^{1} \oplus Z^{0}\ar[d]^{d_{C_{\alpha}}^{0}}\\
			X^{2} \oplus Z^{1}\ar[r]^{h^{1}} \ar[d]_{d_{C_{gf}}^{1}} \ar@{.>}[ur]{s^{0}}&X^{3} \oplus Y^{2}\oplus X^{2} \oplus Z^{1}\ar[d]^{d_{C_{\alpha}}^{1}}\\
			X^{3} \oplus Z^{2}\ar[r]^{h^{2}} \ar[d]_{d_{C_{gf}}^{2}} \ar@{.>}[ur]^{s^{1}}&X^{4} \oplus Y^{3}\oplus X^{3} \oplus Z^{2}\ar[d]^{d_{C_{\alpha}}^{2}}\\
			\vdots \ar[d]&\vdots \ar[d]\\
			X^{n-1} \oplus Z^{n-2}\ar[r]^(.35){h^{n-2}} \ar[d]_{d_{C_{gf}}^{n-2}} &T(X^{0})\oplus Y^{n-1}\oplus X^{n-1} \oplus Z^{n-2}\ar[d]^{d_{C_{\alpha}}^{n-2}}\\
			T(X^{0}) \oplus Z^{n-1}\ar[r]^(.35){h^{n-1}} \ar[d]_{d_{C_{gf}}^{n-1}} \ar@{.>}[ur]^{s^{n-2}}&T(X^{1})\oplus T(Y^{0})\oplus T(X^{0}) \oplus Z^{n-1}\ar[d]^{d_{C_{\alpha}}^{n-1}}\\
			T(X^{1}) \oplus T(Z^{0})\ar[r]^(.35){T(h^{0})}  \ar@{.>}[ur]^{s^{n-1}}&T(X^{2})\oplus T(Y^{1})\oplus T(X^{1}) \oplus T(Z^{0}).}$$
Let $s=(s^{0},s^{1},\dots,s^{n-1})$ with $s^{j}=\begin{bmatrix}
		1 &0\\
		0 &0\\
		0 &0\\
		0 &0
	\end{bmatrix}$ for any $0 \leq j \leq n-1$. We can verify
	$d_{C_{\alpha}}^{n-1}\circ s^{n-1}+T(s^{0}\circ d_{C_{gf}}^{0})=T(h^{0})$ and
	$d_{C_{\alpha}}^{j-1}\circ s^{j-1}+s^{j}\circ d_{C_{gf}}^{j}=h^{j}$ for $1\leq j \leq n-1$.
	This shows that $i_{\alpha} \sim \sigma \beta$.
\end{proof}

\begin{corollary}{\rm \cite[Theorem 2.4]{BJ}}
	If  $T\colon \mathcal{A} \rightarrow \mathcal{A}$ is an automorphism, then the suspension functor $\Sigma\colon {\rm HFact}_{n}(\mathcal{A},T,\omega) \rightarrow {\rm HFact}_{n}(\mathcal{A},T,\omega)$ is an automorphism with the inverse $\Sigma^{-1}$ given by right rotation. Therefore, ${\rm HFact}_{n}(\mathcal{A},T,\omega)$ is a triangulated category.
\end{corollary}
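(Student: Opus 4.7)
The plan is to leverage Theorem~\ref{the3.8}, which already endows ${\rm HFact}_n(\mathcal{A},T,\omega)$ with a right triangulated structure. Since a right triangulated category is triangulated precisely when the suspension functor is an automorphism, the entire task reduces to producing an explicit two-sided inverse of $\Sigma$ under the extra hypothesis that $T\colon \mathcal{A}\rightarrow\mathcal{A}$ is an automorphism.

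First, I would define $\Sigma^{-1}$ on objects by right rotation. Given an $n$-fold factorization $X = (X^j, d_X^j)$ of $\omega$, set $(\Sigma^{-1}X)^0 = T^{-1}(X^{n-1})$ and $(\Sigma^{-1}X)^j = X^{j-1}$ for $1 \le j \le n-1$, with differentials $d_{\Sigma^{-1}X}^0 = -T^{-1}(d_X^{n-1})$ and $d_{\Sigma^{-1}X}^j = -d_X^{j-1}$ for $1 \le j \le n-1$; the final morphism is then $d_{\Sigma^{-1}X}^{n-1}\colon X^{n-2}\rightarrow T(T^{-1}(X^{n-1}))=X^{n-1}$ equal to $-d_X^{n-2}$. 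This formula makes sense precisely because $T^{-1}$ exists. On a morphism $\varphi = (\varphi^0, \dots, \varphi^{n-1})$, I would define $\Sigma^{-1}\varphi = (T^{-1}(\varphi^{n-1}), \varphi^0, \dots, \varphi^{n-2})$.

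Second, I would check that this assignment yields a well-defined additive endofunctor. The $n$-fold composition identities for $\Sigma^{-1}X$ reduce to those for $X$ after translation across $T^{-1}$; here one uses the naturality $\omega_{T^{-1}(Y)} = T^{-1}(\omega_Y)$, which follows from the assumption $\omega_{T(X)} = T(\omega_X)$. Because $n$ is even, the $n$ sign changes introduced in $\Sigma^{-1}X$ cancel in pairs, exactly as in Remark~3.2(1) for $\Sigma$. The functor clearly preserves addition and composition; a homotopy $s = (s^0, \dots, s^{n-1})$ between two morphisms transports to the homotopy $(T^{-1}(s^{n-1}), s^0, \dots, s^{n-2})$, so $\Sigma^{-1}$ descends to an additive endofunctor of ${\rm HFact}_n(\mathcal{A},T,\omega)$.

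Finally, I would verify $\Sigma \circ \Sigma^{-1} = {\rm Id}$ and $\Sigma^{-1} \circ \Sigma = {\rm Id}$ componentwise. The two rotations cancel strictly on the indexing of components, $T$ and $T^{-1}$ cancel strictly because $T$ is an automorphism, and the two sets of sign changes cancel in pairs. Thus the two composites agree with the identity functor already at the level of ${\rm Fact}_n(\mathcal{A},T,\omega)$, and a fortiori in the homotopy category. The main subtlety throughout is the sign bookkeeping needed to confirm that $\Sigma^{-1}X$ is again a genuine $n$-fold factorization of $\omega$; this is the only place where evenness of $n$ plays an essential role, and it is entirely parallel to the check already performed for $\Sigma$. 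With $\Sigma$ now an automorphism, the right triangulated structure of Theorem~\ref{the3.8} upgrades to a triangulated structure on ${\rm HFact}_n(\mathcal{A},T,\omega)$, completing the proof.
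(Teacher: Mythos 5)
Your proposal is correct and takes exactly the route the paper intends: Theorem~\ref{the3.8} supplies the right triangulated structure, and right rotation via $T^{-1}$ gives a strict two-sided inverse of $\Sigma$ on ${\rm Fact}_{n}(\mathcal{A},T,\omega)$, which by the remark preceding the theorem upgrades the structure to a triangulated one. The only nit is that the transported homotopy should carry a sign, namely $(-T^{-1}(s^{n-1}),-s^{0},\dots,-s^{n-2})$, since each homotopy identity involves one differential of the source and one of the target and both acquire a minus sign under rotation; this is immaterial to the argument.
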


\begin{remark}
In \cite[Theorem 2.4]{BJ}, they employ an approach via differential graded (DG) categories by constructing a DG enhancement of ${\rm HFact}_{n}(\mathcal{A},T,\omega)$.
In contrast, our method verifies directly from the definitions that the axioms of a triangulated category are satisfied. Hence the two approaches are completely different in nature.
\end{remark}

\section{The Frobenius exactness of $\text{Fact}_{n}(\mathcal{A},T,\omega)$}

Let $T\colon\mathcal{A} \rightarrow \mathcal{A}$ be an autoequivalence. Take a quasi-inverse $T^{-1}$ of $T$, which fits into an adjoint pair $(T^{-1},T)$. We have the unit $\eta:{\rm Id}_{\mathcal{A}} \rightarrow TT^{-1}$ and the counit $\epsilon:T^{-1}T \rightarrow {\rm Id}_{\mathcal{A}}$. Furthermore, we have the following triangular identities of adjoint functors.
\begin{equation} \label{1}
	T\epsilon \circ \eta_{T}=\text{Id}_{T}.
\end{equation}
\begin{equation} \label{2}
	\epsilon_{T^{-1}} \circ T^{-1}\eta={\rm Id}_{T^{-1}}.
\end{equation}
Define a natural transformation $\omega^{(-1)}=\epsilon\circ T^{-1}\omega:T^{-1} \rightarrow {\rm Id}_{\mathcal{A}}$. We have the following key equalities. These can be referred to in detail in \cite[Remark 2.1]{C}.
\begin{equation} \label{3}
	T\omega^{(-1)} \circ \eta=\omega.
\end{equation}
\begin{equation} \label{4}
	\omega \circ \epsilon =\omega^{(-1)} T.
\end{equation}
\begin{equation} \label{5}
	\epsilon T^{-1} \circ T^{-1}\omega T^{-1}=T^{-1}\epsilon \circ T^{-2}\omega.
\end{equation}
\begin{equation} \label{6}
	\omega T^{-1}=\eta \circ \omega^{(-1)}.
\end{equation}
Let $C$ be an object in $\mathcal{A}$. Define
$$\theta^{0}(C)=C \stackrel{{\rm Id}_{C}}{\longrightarrow} C \stackrel{{\rm Id}_{C}}{\longrightarrow} C \stackrel{{\rm Id}_{C}}{\longrightarrow} C \rightarrow \dots  \stackrel{{\rm Id}_{C}}{\longrightarrow} C
\stackrel{\omega_{C}}{\longrightarrow} T(C);$$
$$\theta^{1}(C)=T^{-1}C \stackrel{\omega_{C}^{(-1)}}{\longrightarrow} C \stackrel{{\rm Id}_{C}}{\longrightarrow} C \stackrel{{\rm Id}_{C}}{\longrightarrow} C \rightarrow \dots  \stackrel{{\rm Id}_{C}}{\longrightarrow} C
\stackrel{\eta_{C}}{\longrightarrow} TT^{-1}C.$$
It is straightforward to verify that both $\theta^{0}(C)$ and $\theta^{1}(C)$ are indeed $n$-fold $(\mathcal{A},T)$-factorizations of $\omega$, since they satisfy the identities in \eqref{6} and \eqref{3}. More precisely, they fulfill the following equations.
{\small $$
\left\{
\begin{aligned}
\omega_{C}\circ {\rm Id}_{C} \circ {\rm Id}_{C}\circ \dots \circ {\rm Id}_{C}
&= \omega_{C},\\[4pt]
T({\rm Id}_{C}) \circ \omega_{C}\circ {\rm Id}_{C} \dots \circ {\rm Id}_{C}
&= \omega_{C},\\[4pt]
\vdots \quad & \\[-2pt]
T({\rm Id}_{C}) \circ T({\rm Id}_{C}) \circ T({\rm Id}_{C}) \circ \dots \circ \omega_{C}
&= \omega_{C},
\end{aligned}
\right.
$$
$$
\left\{
\begin{aligned}
\eta_{C}\circ {\rm Id}_{C} \circ {\rm Id}_{C}\circ \dots \circ \omega_{C}^{(-1)}
&= \omega_{T^{-1}C},\\[4pt]
T(\omega_{C}^{(-1)}) \circ \eta_{C}\circ {\rm Id}_{C} \dots \circ {\rm Id}_{C}
&= \omega_{C},\\[4pt]
\vdots \quad & \\[-2pt]
T({\rm Id}_{C}) \circ T({\rm Id}_{C}) \circ T({\rm Id}_{C}) \circ \dots \circ T(\omega_{C}^{(-1)}) \circ \eta_{C}
&= \omega_{C}.
\end{aligned}
\right.
$$}
We thus obtain two functors
$$
\theta^{s}\colon \mathcal{A} \longrightarrow {\rm Fact}_{n}(\mathcal{A},T,\omega),
~~ s = 0,1,
$$
together with the projection functors
$$
{\rm pr}^{s} : {\rm Fact}_{n}(\mathcal{A},T,\omega) \longrightarrow \mathcal{A},
~~s = 0,1,\dots ,n-1,
$$
which send an $n$-fold $(\mathcal{A},T)$-factorization $X$ to its $s$-th component $X^{s}$.

For every $n$-fold $(\mathcal{A},T)$-factorization $X$ of $\omega$, we also have the shifted $n$-fold $(\mathcal{A},T)$-factorization
$$
S(X)
  = X^{1}\xrightarrow{d_{X}^{1}}
    X^{2}\xrightarrow{d_{X}^{2}}
    X^{3}\longrightarrow \cdots
    \xrightarrow{d_{X}^{\,n-2}}
    X^{\,n-1}\xrightarrow{d_{X}^{\,n-1}}
    T(X^{0})\xrightarrow{T(d_{X}^{0})}
    T(X^{1}).
$$

\begin{definition}\cite[Definition 2.1]{BT}
Let $\mathcal{A}$ be an additive category. For a sequence $X\stackrel{f}\rightarrow Y\stackrel{g}\rightarrow Z $ in $\mathcal A$, the pair of composable morphisms $(f,g)$ is called a kernel-cokernel pair, if $f=\text{ker} g$, $g=\text{coker} f$. Let $\mathscr E$ be a class of kernel-cokernel pairs. If $(f,g) \in \mathscr{E}$, then $f$ is called an inflation; $g$ is called a deflation. The short exact sequence $X\stackrel{f}\rightarrow Y\stackrel{g}\rightarrow Z$ is called a conflation.

 An exact structure on $\mathcal{A}$ is a class $\mathcal{E}$ of kernel-cokernel pairs which is closed under isomorphisms and satisfies the following axioms:

 [E0] For each object $A \in \mathcal{A}$, the identity morphism $1_{A}$ is a deflation.

 [E0$^{\text{op}}$] For each object $A \in \mathcal{A}$, the identity morphism $1_{A}$ is an inflation.

[E1] The class of deflations is closed under composition.

[E1$^{\text{op}}$] The class of inflations is closed under composition.

[E2] The pullback of a deflation along an arbitrary morphism exists and yields a deflation.

[E2$^{\text{op}}$] The pushout of an inflation along an arbitrary morphism exists and yields an inflation.

An exact category is a pair $(\mathcal{A},\mathcal{E})$ consisting of an additive category $\mathcal{A}$ and an exact structure $\mathcal{E}$ on $\mathcal{A}$.
\end{definition}

\begin{remark}
As noted in \cite{KB} and in \cite[Remark~2.4]{BT}, the axioms \([{\rm E0}]\) and \([{\rm E0}^{\mathrm{op}}]\) may be equivalently replaced by the requirement that the identity morphism of the zero object is a deflation. We refer to this equivalent condition as \([{\rm E0}^{'}]\).
\end{remark}

\begin{lemma} \label{lem1}
If every morphism in $\mathcal{A}$ has a kernel, then every morphism in ${\rm Fact}_{n}(\mathcal{A},T,\omega)$ also has a kernel, which is given componentwise.
\end{lemma}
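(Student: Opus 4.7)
The plan is to construct the kernel of a morphism $\varphi=(\varphi^0,\ldots,\varphi^{n-1}):X\to Y$ in ${\rm Fact}_n(\mathcal A,T,\omega)$ componentwise: take a kernel $k^j:K^j\to X^j$ of $\varphi^j$ in $\mathcal A$ for each $j$, equip $K=(K^j)$ with differentials obtained from the universal property, and verify that $k=(k^0,\ldots,k^{n-1})$ is the kernel in ${\rm Fact}_n(\mathcal A,T,\omega)$.

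First, I would construct the differentials. For $0\le j\le n-2$, the composite $\varphi^{j+1}\circ d_X^j\circ k^j$ equals $d_Y^j\circ\varphi^j\circ k^j=0$, so the universal property of $k^{j+1}$ yields a unique morphism $d_K^j:K^j\to K^{j+1}$ with $k^{j+1}\circ d_K^j=d_X^j\circ k^j$. For $j=n-1$, I would invoke the standing hypothesis of Section~4 that $T$ is an autoequivalence: since $T$ admits a quasi-inverse, it is exact, so $T(k^0):T(K^0)\to T(X^0)$ is a kernel of $T(\varphi^0)$. From $T(\varphi^0)\circ d_X^{n-1}\circ k^{n-1}=d_Y^{n-1}\circ\varphi^{n-1}\circ k^{n-1}=0$, the universal property then delivers a unique $d_K^{n-1}:K^{n-1}\to T(K^0)$ with $T(k^0)\circ d_K^{n-1}=d_X^{n-1}\circ k^{n-1}$.

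Next, I would verify that $K=(K^j,d_K^j)$ is an $n$-fold $(\mathcal A,T)$-factorization of $\omega$. Each of the $n$ defining identities can be checked by post-composing with the appropriate monomorphism: either some $k^j$ (for the identities at the positions $j\ge 1$) or $T(k^0)$ (for the identity at position $0$). A typical such computation is
\[
T(k^0)\circ d_K^{n-1}\circ\cdots\circ d_K^0
= d_X^{n-1}\circ\cdots\circ d_X^0\circ k^0
= \omega_{X^0}\circ k^0
= T(k^0)\circ\omega_{K^0},
\]
where the last equality uses naturality of $\omega$; cancelling the monomorphism $T(k^0)$ gives the desired identity for $K$. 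The remaining $n-1$ identities are handled analogously. By construction, $k:K\to X$ is then a morphism in ${\rm Fact}_n(\mathcal A,T,\omega)$ satisfying $\varphi\circ k=0$.

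Finally, I would verify the universal property. Given $\psi:Z\to X$ in ${\rm Fact}_n(\mathcal A,T,\omega)$ with $\varphi\circ\psi=0$, each $\psi^j$ factors uniquely as $\psi^j=k^j\circ\widetilde\psi^j$ for some $\widetilde\psi^j:Z^j\to K^j$, by the componentwise universal property. Compatibility of $\widetilde\psi=(\widetilde\psi^j)$ with the differentials again follows by post-composing with $k^{j+1}$ (or with $T(k^0)$ in the last slot) and cancelling the monomorphism, and uniqueness of $\widetilde\psi$ is inherited from the componentwise uniqueness. The only genuinely nontrivial point in the whole argument is the construction of $d_K^{n-1}$, which relies on $T(k^0)$ being itself the kernel of $T(\varphi^0)$; this is exactly where the standing hypothesis that $T$ is an autoequivalence is essential, and is the main obstacle one must keep track of.
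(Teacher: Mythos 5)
Your proposal is correct and follows essentially the same route as the paper: componentwise kernels, differentials induced by the universal property, verification of the factorization identities by post-composing with the monomorphisms $k^j$ and $T(k^0)$, and a componentwise check of the universal property. In fact you make explicit a point the paper's proof leaves implicit, namely that the existence of $d_K^{n-1}$ requires $T(k^0)$ to be a kernel of $T(\varphi^0)$, which holds because the standing hypothesis of Section~4 makes $T$ an autoequivalence.
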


\begin{proof}
	Let $\varphi=(\varphi^{0},\varphi^{1},\dots,\varphi^{n-1}):X \rightarrow Y$ be a morphism in $\text{Fact}_{n}(\mathcal{A},T,\omega)$ and $k^{j}:K^{j} \rightarrow X^{j}$ a kernel of $\varphi^{j}$ for $j=0,1,\dots, n-1$. There exists a unique morphism $d_{K}^{j}:K^{j} \rightarrow K^{j+1}$ satisfying $k^{j+1}\circ d_{K}^{j}=d_{X}^{j} \circ k^{j}$ for $j=0,1,\dots, n-2$ and $T(k^{0})\circ d_{K}^{n-1}=d_{X}^{n-1}\circ k^{n-1}$ for $j=n-1$. Thus we obtain the commutative diagram: {\small
$$\xymatrix{K^{0} \ar[r]^{d_{K}^{0}}\ar[d]^{k^{0}}&K^{1} \ar[r]^{d_{K}^{1}}\ar[d]^{k^{1}} &K^{2}
			\ar[r]\ar[d]^{k^{2}} &\cdots \ar[r]^{d_{K}^{n-3}}&K^{n-2}
			\ar[r]^{d_{K}^{n-2}}\ar[d]^{k^{n-2}}  &K^{n-1} \ar[r]^{d_{K}^{n-1}}\ar[d]^{k^{n-1}} &T(K^{0}) \ar[d]^{T(k^{0})}\\
			X^{0} \ar[r]^{d_{X}^{0}}\ar[d]^{\varphi^{0}}&X^{1} \ar[r]^{d_{X}^{1}}\ar[d]^{\varphi^{1}} &X^{2}
			\ar[r]\ar[d]^{\varphi^{2}} &\cdots
			\ar[r]^{d_{X}^{n-3}} &X^{n-2}
			\ar[r]^{d_{X}^{n-2}} \ar[d]^{\varphi^{n-2}}&X^{n-1} \ar[r]^{d_{X}^{n-1}}\ar[d]^{\varphi^{n-1}} &T(X^{0}) \ar[d]^{T(\varphi^{0})}\\
			Y^{0} \ar[r]^{d_{Y}^{0}}&Y^{1} \ar[r]^{d_{Y}^{1}} &Y^{2} \ar[r] &\cdots
			\ar[r]^{d_{Y}^{n-3}} &Y^{n-2}
			\ar[r]^{d_{Y}^{n-2}} &Y^{n-1} \ar[r]^{d_{Y}^{n-1}} &T(Y_{0})  .}$$  }
		
	Claim (a): $K=(K^{j},d_{K}^{j})$ is an $n$-fold $(\mathcal{A},T)$-factorization of $\omega$.
	
	\begin{itemize}
		\item [(1)]
		$T(k^{0})\circ d_{K}^{n-1} \circ d_{K}^{n-2} \circ \dots \circ d_{K}^{1} \circ d_{K}^{0}
		=d_{X}^{n-1} \circ k^{n-1}\circ d_{K}^{n-2} \circ \dots \circ d_{K}^{1} \circ d_{K}^{0}
		=\dots
		=\omega_{X^{0}}\circ k^{0}
		=T(k^{0})\circ \omega_{K^{0}}$.
		
		Since $T(k^{0})$ is injective, we have $d_{K}^{n-1} \circ d_{K}^{n-2} \circ \dots \circ d_{K}^{1} \circ d_{K}^{0}=\omega_{K^{0}}$.
		
		\item [(2)]$T(k^{1}) \circ T(d_{K}^{0})\circ d_{K}^{n-1}\circ \dots \circ d_{K}^{2} \circ d_{K}^{1}
		=T(d_{X}^{0})\circ T(k^{0})\circ d_{K}^{n-1}\circ \dots \circ d_{K}^{2} \circ d_{K}^{1}
		=\dots
		=\omega_{X^{1}}\circ k^{1}
		=T(k^{1})\circ \omega_{K^{1}}$.
		
		Since $T(k^{1})$ is injective, we have $T(d_{K}^{0})\circ d_{K}^{n-1}\circ \dots \circ d_{K}^{2} \circ d_{K}^{1}=\omega_{K^{1}}$.
		
		$\vdots$
		
		\item [(n)]$T(k^{n-1}) \circ T(d_{K}^{n-2})\circ T(d_{K}^{n-3})\circ \dots \circ T(d_{K}^{0}) \circ d_{K}^{n-1}
		=T(d_{X}^{n-2})\circ T(k^{n-2})\circ T(d_{K}^{n-3})\circ \dots \circ T(d_{K}^{0}) \circ d_{K}^{n-1}
		=\dots
		=\omega_{X^{n-1}}\circ k^{n-1}
		=T(k^{n-1})\circ \omega_{K^{n-1}}$.
		
		Since $T(k^{n-1})$ is injective, we have $T(d_{K}^{n-2})\circ T(d_{K}^{n-3})\circ \dots \circ T(d_{K}^{0}) \circ d_{K}^{n-1}=\omega_{K^{n-1}}$.
	\end{itemize}
	
	Claim (b): The morphism $k=(k^{0},k^{1},\dots,k^{n-1}):K \rightarrow X$ is a required kenel of $\varphi$.
	
	As showen in the diagram. Let $M=(M^{j},d_{M}^{j})$ be an $n$-fold $(\mathcal{A},T)$-factorization of $\omega$, $g=(g^{0},g^{1},\dots,g^{n-1}):M \rightarrow X$ a morphism satisfying $\varphi\circ g=0$ in ${\rm Fact}_{n}(\mathcal{A},T,\omega)$. Then we have $\varphi^{j}\circ g^{j}=0$ for $j=0,1,\dots,n-1$. Therefore there exists a unique morphism $h^{j}:M^{j} \rightarrow K^{j}$ such that $k^{j}\circ h^{j}=g^{j}$ for $j=0,1,\dots,n-1$. Consequently, there exists a unique morphism $h:M \rightarrow K$ such that $k\circ h=g$. In deed, $h:M \rightarrow K$ is a morphism between $n$-fold $(\mathcal{A},T)$-factorizations of $\omega$. For any $0 \leq
	j \leq n-2$, $k^{j+1}\circ d_{K}^{j} \circ h^{j}=d_{X}^{j} \circ k^{j}\circ h^{j}=d_{X}^{j} \circ g^{j}=g^{j+1} \circ d_{M}^{j}=k^{j+1}\circ h^{j+1} \circ d_{M}^{j}$. Since $k^{j+1}$ is injective, $d_{K}^{j} \circ h^{j}=h^{j+1} \circ d_{M}^{j}$. Additionally, for $j=n-1$, $T(k^{0})\circ d_{K}^{n-1}\circ h^{n-1}=d_{X}^{n-1}\circ k^{n-1}\circ h^{n-1}=d_{X}^{n-1}\circ g^{n-1}=T(g^{0}) \circ d_{M}^{n-1}=T(k^{0})\circ T(h^{0}) \circ d_{M}^{n-1}$. Since $T(k^{0})$ is injective, $d_{K}^{n-1}\circ h^{n-1}=T(h^{0}) \circ d_{M}^{n-1}$.
 $$\scriptsize \xymatrix{M^{0}
		\ar[rr]^{d_{M}^{0}}
		\ar@/_/[ddr]_>(.6){g^{0}}
		\ar@{.>}[dr]^{h^{0}} & &M^{1}
		\ar[rr]^{d_{M}^{1}}
		\ar@/_/[ddr]_>(.6){g^{1}}
		\ar@{.>}[dr]^{h^{1}}& &M^{2}\ar[r]\ar@/_/[ddr]_>(.6){g^{2}}
		\ar@{.>}[dr]^{h^{2}}&\cdots\ar[r]&M^{n-1}\ar[rr]^{d_{M}^{n-1}}\ar@/_/[ddr]_>(.6){g^{n-1}}
		\ar@{.>}[dr]^{h^{n-1}}& & T(M^{0}) \ar@/_/[ddr]_>(.6){T(g^{0})}
		\ar@{.>}[dr]^{T(h^{0})} \\
		& K^{0} \ar[rr]^{d_{K}^{0}} \ar[d]^{k^{0}}&
		& K^{1} \ar[rr]^{d_{K}^{1}} \ar[d]^{k^{1}}& &K^{2}\ar[r]^{d_{K}^{2}}\ar[d]^{k^{2}}
		&\cdots\ar[r]&K^{n-1}\ar[d]^{k^{n-1}}\ar[rr]^{d_{K}^{n-1}}& &T(K^{0}) \ar[d]^{T(k^{0})} \\
		& X^{0} \ar[rr]^{d_{X}^{0}}\ar[d]^{\varphi^{0}}&
		&  X^{1} \ar[rr]^{d_{X}^{1}}\ar[d]^{\varphi^{1}}& &X^{2}\ar[r]^{d_{X}^{2}}\ar[d]^{ \varphi^{2}}
		&\cdots\ar[r]&X^{n-1}\ar[d]^{\varphi^{n-1}}\ar[rr]^{d_{X}^{n-1}}& &T(X^{0})\ar[d]^{T(\varphi^{0})}\\
		&Y^{0} \ar[rr]^{d_{Y}^{0}}& &Y^{1} \ar[rr]^{d_{Y}^{1}} & &Y^{2}\ar[r]^{d_{Y}^{2}}
		&\cdots\ar[r]&Y^{n-1}\ar[rr]^{d_{Y}^{n-1}}& &T(Y^{0}).   }$$
\end{proof}

Assume that $(\mathcal{A},\mathcal{E})$ is an exact category. Give a class $\tilde{\mathcal{E}}$ of composable morphism pairs $(l,p)$ in ${\rm Fact}_{n}(\mathcal{A},T,\omega)$ such that $(l^{0},p^{0}),(l^{1},p^{1}),\dots,(l^{n-1},p^{n-1})$ belong to $\mathcal{E}$. By Lemma \ref{lem1} and its dual, any element in $\tilde{\mathcal{E}}$ is a kernel-cokernel pair in ${\rm Fact}_{n}(\mathcal{A},T,\omega)$. Furthermore, If $\mathcal{A}$ is abelian, so is ${\rm Fact}_{n}(\mathcal{A},T,\omega)$.

\begin{lemma} \label{lem2}
 $({\rm Fact}_{n}(\mathcal{A},T,\omega),\tilde{\mathcal{E}})$ is 	an exact category.
\end{lemma}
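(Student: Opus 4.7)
The plan is to reduce each axiom of an exact structure on ${\rm Fact}_{n}(\mathcal{A},T,\omega)$ to the corresponding axiom for $(\mathcal{A},\mathcal{E})$, exploiting the fact that $\tilde{\mathcal{E}}$ is defined componentwise. Lemma \ref{lem1} and its dual already guarantee that kernels and cokernels in ${\rm Fact}_{n}(\mathcal{A},T,\omega)$ are computed levelwise, so every element of $\tilde{\mathcal{E}}$ is a kernel-cokernel pair, and the class $\tilde{\mathcal{E}}$ is closed under isomorphism because conflations in $\mathcal{A}$ are.

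The axioms [E0$'$], [E1] and [E1$^{\text{op}}$] are essentially immediate. The identity on the zero factorization is a deflation in every component. If $p\colon X \rightarrow Y$ and $q\colon Y \rightarrow Z$ are deflations in $\tilde{\mathcal{E}}$, then each $q^{j} \circ p^{j}$ is a deflation in $\mathcal{E}$ by [E1] for $(\mathcal{A},\mathcal{E})$, and the componentwise kernel supplied by Lemma \ref{lem1} assembles with $q \circ p$ into a pair in $\tilde{\mathcal{E}}$. The inflation version follows dually.

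The substantive work concerns [E2] and, dually, [E2$^{\text{op}}$]. Given a deflation $p \colon Y \rightarrow Z$ in $\tilde{\mathcal{E}}$ and an arbitrary morphism $f \colon X \rightarrow Z$ in ${\rm Fact}_{n}(\mathcal{A},T,\omega)$, I will form, for each $0 \le j \le n-1$, a pullback $P^{j}$ of $p^{j}$ along $f^{j}$ in $(\mathcal{A},\mathcal{E})$, with induced projections $\bar{p}^{j}\colon P^{j} \rightarrow X^{j}$ and $\bar{f}^{j}\colon P^{j} \rightarrow Y^{j}$; axiom [E2] in $(\mathcal{A},\mathcal{E})$ makes $\bar{p}^{j}$ a deflation. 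The differentials $d_{P}^{j}\colon P^{j} \rightarrow P^{j+1}$ for $0 \le j \le n-2$ will be defined by the universal property of $P^{j+1}$ applied to the compatible pair $(d_{X}^{j} \circ \bar{p}^{j},\, d_{Y}^{j} \circ \bar{f}^{j})$; compatibility follows from the identities $f^{j+1} \circ d_{X}^{j} = d_{Z}^{j} \circ f^{j}$ and $p^{j+1} \circ d_{Y}^{j} = d_{Z}^{j} \circ p^{j}$. For $d_{P}^{n-1}\colon P^{n-1} \rightarrow T(P^{0})$ I will use that $T$, being an autoequivalence, preserves pullbacks, so that $T(P^{0})$ is canonically the pullback of $T(p^{0})$ along $T(f^{0})$.

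Three verifications then remain. First, $P = (P^{j}, d_{P}^{j})$ must actually be an $n$-fold $(\mathcal{A},T)$-factorization of $\omega$; the required $n$-fold composition identities follow from the analogous identities in $X$ and $Y$ together with the joint monicity of each pair $(\bar{p}^{j}, \bar{f}^{j})$, in the same style of argument used in Lemma \ref{lem1}. Second, the triple $(P, \bar{p}, \bar{f})$ must realize the pullback of $p$ and $f$ in ${\rm Fact}_{n}(\mathcal{A},T,\omega)$; this reduces componentwise to the pullback property in $\mathcal{A}$, with the morphism relations forcing the mediating map to be a morphism of factorizations, just as in the proof of Lemma \ref{lem1}. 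Third, $\bar{p}$ is a deflation in $\tilde{\mathcal{E}}$ because each $\bar{p}^{j}$ is a deflation in $\mathcal{E}$, and the componentwise kernel of $\bar{p}$ (which coincides with that of $p$) supplies the conflation data. The main technical obstacle is the first verification, ensuring that the pulled-back sequence remains a valid $n$-fold factorization; all other checks are a matter of unwinding universal properties. Axiom [E2$^{\text{op}}$] is then established by the dual argument, constructing pushouts componentwise in $(\mathcal{A},\mathcal{E})$ and using that $T$ preserves pushouts.
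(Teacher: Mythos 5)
Your proposal is correct and follows essentially the same route as the paper: verify [E0$'$], [E1], [E1$^{\mathrm{op}}$] componentwise, and establish [E2] (and dually [E2$^{\mathrm{op}}$]) by forming the pullback levelwise, inducing the differentials from the universal property, and using the joint monicity of the two pullback projections (equivalently, the injectivity of $\begin{bmatrix} p_{1}^{j} \\ -f_{1}^{j}\end{bmatrix}$, as the paper phrases it) to check the $n$-fold factorization identities. Your explicit remark that the autoequivalence $T$ preserves pullbacks, which is what legitimizes the definition of the last differential $d_{P}^{\,n-1}\colon P^{n-1}\rightarrow T(P^{0})$, is a point the paper uses only implicitly.
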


\begin{proof}
	
	[E0$^{'}$] Obviously, $1_{(0,0)}:(0,0)\rightarrow (0,0)$ is a deflation.
	
	[E1] Suppose $p:Y\rightarrow Z,\ q:Z\rightarrow W$ are deflations in $\text{Fact}_{n}(\mathcal{A},T,\omega)$. Then for $0 \leq j \leq n-1$, we have $p^{j}:Y^{j}\rightarrow Z^{j},\ q^{j}:Z^{j}\rightarrow W^{j}$ are deflations in $\mathcal{A}$. So for any $j$, $q^{j}p^{j}$ is a deflation in $\mathcal{A}$. Furthermore, $qp$ is a deflation in $\text{Fact}_{n}(\mathcal{A},T,\omega)$.
	
	[E2] For a deflation $p:Y\rightarrow Z$ and a morphism $f: Z_{1} \rightarrow Z$ in $\text{Fact}_{n}(\mathcal{A},T,\omega)$, we can take the pullback of each component. As is shown in the following diagram.
$$\footnotesize \xymatrix @R=14pt@C=14pt{ Y_{1}^{0}\ar@{.>}[rr]^{d_{Y_{1}}^{0}}\ar@{.>}[dd]_(.3){f_{1}^{0}}\ar@{.>}[dr]^{p_{1}^{0}} & &Y_{1}^{1}\ar@{.>}[rr]^{d_{Y_{1}}^{1}}\ar@{.>}'[d]_(.6){f_{1}^{1}}[dd]\ar@{.>}[dr]^{p_{1}^{1}} & &Y_{1}^{2}\ar@{.>}[rr]\ar@{.>}'[d]_(.6){f_{1}^{2}}[dd]\ar@{.>}[dr]^{p_{1}^{2}} & &\cdots \ar@{.>}[rr] & &Y_{1}^{n-1}\ar@{.>}[rr]^{d_{Y_{1}}^{n-1}}\ar@{.>}'[d]_(.6){f_{1}^{n-1}}[dd]\ar@{.>}[dr]^{p_{1}^{n-1}}& &T(Y_{1}^{0})\ar@{.>}'[d]_(.6){T(f_{1}^{0})}[dd]\ar@{.>}[dr]^{T(p_{1}^{0})}\\
			 &Z_{1}^{0}\ar[rr]\ar[dd]^(.3){f^{0}} & &Z_{1}^{1}\ar[rr]\ar[dd]^(.3){f^{1}} & &Z_{1}^{2}\ar[rr]\ar[dd]^(.3){f^{2}} & &\cdots \ar[rr]& &Z_{1}^{n-1}\ar[rr]\ar[dd]^(.3){f^{n-1}} & &T(Z_{1}^{0})\ar[dd]^(.3){T(f^{0})}\\
			Y^{0}\ar[rr]\ar[dr]^{p^{0}} & &Y^{1}\ar[rr]\ar[dr]^{p^{1}} & &Y^{2}\ar[rr]\ar[dr]^{p^{2}} & &\cdots \ar[rr]& &Y^{n-1}\ar[rr]\ar[dr]^{p^{n-1}}& &T(Y^{0})\ar[dr]^{T(p^{0})}\\
			&Z^{0}\ar[rr]& &Z^{1}\ar[rr]& &Z^{2}\ar[rr] & & \cdots \ar[rr]& &Z^{n-1}\ar[rr] & &T(Z^{0})  .}$$
		
	For each pullback, we can obtain an exact sequence $0 \longrightarrow Y_{1}^{j} \stackrel{{\tiny \begin{bmatrix}
				p_{1}^{j} \\ -f_{1}^{j}
	\end{bmatrix}}}{\longrightarrow} Z_{1}^{j}\oplus Y^{j} \stackrel{\tiny\setlength{\arraycolsep}{1pt} \begin{bmatrix}
	f^{j} & p^{j}
\end{bmatrix}}{\longrightarrow} Z^{j}$. Meanwhile, for any $0 \leq j \leq n-2$, there exists a unique morphism $d_{Y_{1}}^{j}:Y_{1}^{j}\rightarrow Y_{1}^{j+1}$ satisfying $p_{1}^{j+1}\circ d_{Y_{1}}^{j}=d_{Z_{1}}^{j}\circ p_{1}^{j}$ and $f_{1}^{j+1}\circ d_{Y_{1}}^{j}=d_{Y}^{j}\circ f_{1}^{j}$; for $j=n-1$, there exists a unique morphism $d_{Y_{1}}^{n-1}:Y_{1}^{n-1}\rightarrow T(Y_{1}^{0})$ satisfying $T(p_{1}^{0})\circ d_{Y_{1}}^{n-1}=d_{Z_{1}}^{n-1}\circ p_{1}^{n-1}$ and $T(f_{1}^{0})\circ d_{Y_{1}}^{n-1}=d_{Y}^{n-1}\circ f_{1}^{n-1}$. Then we can obtain an entire commutative diagram which each row is exact.
$$\xymatrix{0 \ar[r] &Y_{1}^{0} \ar[r] \ar[d]^{d_{Y_{1}}^{0}} &Z_{1}^{0}\oplus Y^{0} \ar[r] \ar[d]^{d_{Z_{1}}^{0}\oplus d_{Y}^{0}} &Z^{0} \ar[d]^{d_{Z}^{0}}\\
		0 \ar[r] &Y_{1}^{1} \ar[r] \ar[d]^{d_{Y_{1}}^{1}} &Z_{1}^{1}\oplus Y^{1} \ar[r] \ar[d]^{d_{Z_{1}}^{1}\oplus d_{Y}^{1}} &Z^{1} \ar[d]^{d_{Z}^{1}}\\
		0 \ar[r] &Y_{1}^{2} \ar[r] \ar@{.>}[d] &Z_{1}^{2}\oplus Y^{2} \ar[r] \ar@{.>}[d] &Z^{2} \ar@{.>}[d]\\
		0 \ar[r] &Y_{1}^{n-1} \ar[r] \ar[d]^{d_{Y_{1}}^{n-1}} &Z_{1}^{n-1}\oplus Y^{n-1} \ar[r] \ar[d]^{d_{Z_{1}}^{n-1}\oplus d_{Y}^{n-1}} &Z^{n-1} \ar[d]^{d_{Z}^{n-1}}\\
		0 \ar[r] &T(Y_{1}^{0}) \ar[r] &T(Z_{1}^{0})\oplus T(Y^{0}) \ar[r]   &T(Z^{0})  .}$$

Since $\begin{bmatrix}
	p_{1}^{j}\\ -f_{1}^{j}
\end{bmatrix}$ is injective for any $0 \leq j \leq n-1$, we can verify that $Y_{1}$ is an $n$-fold $(\mathcal{A},T)$-factorization of $\omega$.
Additionally, the morphisms $f_{1}=(f_{1}^{0},f_{1}^{1},\dots,f_{1}^{n-1}),\ p_{1}=(p_{1}^{0},p_{1}^{1},\dots,p_{1}^{n-1})$ are the morphisms between $n$-fold $(\mathcal{A},T)$-factorizations of $\omega$. Thus, the pullback of $p$ and $f$ exists and is given componentwise.

	[E1$^{\text{op}}$] and [E2$^{\text{op}}$] are dual.
\end{proof}

\begin{lemma} \label{lem3}
	We have adjoint pairs $(\theta^{0},{\rm pr}^{0}),(\theta^{1},{\rm pr}^{1}),({\rm pr}^{n-1},\theta^{0}),({\rm pr}^{n-1}S,\theta^{1})$.
\end{lemma}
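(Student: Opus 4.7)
The plan is to verify each of the four adjunctions directly, by exhibiting, for every $C \in \mathcal{A}$ and every $X \in {\rm Fact}_{n}(\mathcal{A},T,\omega)$, an explicit natural bijection between the two hom-sets in question. In all four cases the guiding principle is that a morphism of $n$-fold factorizations into or out of $\theta^{s}(C)$ is completely determined by one of its components; the remaining components are then forced by the identity middle-differentials of $\theta^{s}(C)$, and the outer squares (involving $\omega_{C}$, $\omega_{C}^{(-1)}$, or $\eta_{C}$) come for free from the factorization identities combined with the naturality of $\omega$, $\eta$, $\epsilon$.

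For the two adjunctions involving $\theta^{0}$ the argument is essentially mechanical. For $(\theta^{0},{\rm pr}^{0})$ I would send $\varphi \mapsto \varphi^{0}$; given $g\colon C \to X^{0}$ I recover $\varphi$ by setting $\varphi^{j} := d_{X}^{j-1} \circ \cdots \circ d_{X}^{0} \circ g$ for $1 \le j \le n-1$, at which point the inner squares commute tautologically and the outer square $T(g) \circ \omega_{C} = d_{X}^{n-1} \circ \varphi^{n-1}$ follows from $d_{X}^{n-1} \circ \cdots \circ d_{X}^{0} = \omega_{X^{0}}$ and naturality of $\omega$. The adjunction $({\rm pr}^{n-1},\theta^{0})$ is the formal dual: send $\varphi \mapsto \varphi^{n-1}$, and given $g\colon X^{n-1} \to C$ define $\varphi^{j} := g \circ d_{X}^{n-2} \circ \cdots \circ d_{X}^{j}$, with the outer square verified via the identity $T(d_{X}^{n-2}) \circ \cdots \circ T(d_{X}^{0}) \circ d_{X}^{n-1} = \omega_{X^{n-1}}$.

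The two remaining adjunctions involve $\theta^{1}$ and genuinely use the adjoint pair $(T^{-1},T)$ together with the equalities \eqref{1}--\eqref{6}. For $(\theta^{1},{\rm pr}^{1})$, given $g\colon C \to X^{1}$ I would set $\varphi^{j} := d_{X}^{j-1} \circ \cdots \circ d_{X}^{1} \circ g$ for $2 \le j \le n-1$, and define $\varphi^{0}\colon T^{-1}C \to X^{0}$ as the unique morphism satisfying $T(\varphi^{0}) \circ \eta_{C} = d_{X}^{n-1} \circ \cdots \circ d_{X}^{1} \circ g$ (afforded by the adjunction isomorphism $\Hom(T^{-1}C, X^{0}) \cong \Hom(C, TX^{0})$). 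The outer square involving $\eta_{C}$ then holds by construction; for the first square $\varphi^{1} \circ \omega_{C}^{(-1)} = d_{X}^{0} \circ \varphi^{0}$ I would apply the bijection $h \mapsto T(h) \circ \eta_{C}$ to both sides and show they both map to $\omega_{X^{1}} \circ g$, using identity \eqref{3} and naturality of $\omega$ on one side, and the factorization identity $T(d_{X}^{0}) \circ d_{X}^{n-1} \circ \cdots \circ d_{X}^{1} = \omega_{X^{1}}$ on the other. The case $({\rm pr}^{n-1}S,\theta^{1})$ is analogous: starting from $g\colon T(X^{0}) \to C$, set $\varphi^{n-1} := g \circ d_{X}^{n-1}$, propagate to $\varphi^{j} := \varphi^{j+1} \circ d_{X}^{j}$ for $1 \le j \le n-2$, and use the quasi-inverse isomorphism $\Hom(TX^{0}, C) \cong \Hom(X^{0}, T^{-1}C)$ to produce $\varphi^{0}$; compatibility with $\omega_{C}^{(-1)}$ will then follow from $\omega^{(-1)} = \epsilon \circ T^{-1}\omega$, naturality of $\epsilon$, and the factorization identity $d_{X}^{n-1} \circ \cdots \circ d_{X}^{0} = \omega_{X^{0}}$.

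The main obstacle will lie in the two $\theta^{1}$ cases. There $\varphi^{0}$ is linked to the other components only through the unit $\eta$ or the counit $\epsilon$, not via a pure composition of differentials, so commutativity of the outer square involving $\omega_{C}^{(-1)}$ cannot be read off the diagram and must be extracted by composing both sides with an adjunction natural map and then reducing to a common morphism by means of the identities \eqref{1}--\eqref{6}. Once each of the four bijections has been checked to be natural in both $C$ and $X$, the four adjunctions follow.
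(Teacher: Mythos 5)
Your proposal is correct and follows essentially the same route as the paper: each adjunction is established by an explicit natural bijection in which a morphism into or out of $\theta^{s}(C)$ is determined by a single component, the rest being forced by the identity differentials, with the outer squares checked via the factorization identities and the relations among $\omega$, $\omega^{(-1)}$, $\eta$, $\epsilon$. Your description of $\varphi^{0}$ in the two $\theta^{1}$ cases via the adjunction bijection for $(T^{-1},T)$ unwinds to exactly the paper's explicit formulas $\epsilon_{X^{0}}\circ T^{-1}(d_{X}^{n-1}\circ\cdots\circ d_{X}^{1}\circ g)$ and $T^{-1}(g)\circ\epsilon_{X^{0}}^{-1}$.
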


\begin{proof}
	For any object $C \in \mathcal{A}, X \in {\rm Fact}_{n}(\mathcal{A},T,\omega)$.
	\begin{itemize}
		\item [(1)]
		Define a morphism
		\begin{center}
		$\Phi_{C,X}:{\rm Hom}_{\mathcal{A}}(C,X^{0}) \rightarrow {\rm Hom}_{{\rm Fact}_{n}(\mathcal{A},T,\omega)}(\theta^{0}(C),X),\ \ \ \ \ g \mapsto (g^{0},g^{1},\dots,g^{n-1})$,\\
		\end{center}
		where $g^{0}=g$ and  $g^{j}=d_{X}^{j-1}\circ \dots \circ d_{X}^{0}\circ g$ for $j=1,2,\dots n-1$.
$$\xymatrix{C \ar[r]^{\rm Id_{C}}\ar[d]^{g^{0}=g}&C \ar[r]^{\rm Id_{C}}\ar[d]^{g^{1}} &C
				\ar[r]\ar[d]^{g^{2}} &\cdots
				\ar[r]^{\rm Id_{C}} &C \ar[r]^{\omega_{C}}\ar[d]^{g{n-1}} &T(C) \ar[d]^{T(g^{0})}\\
				X^{0} \ar[r]^{d_{X}^{0}}&X^{1} \ar[r]^{d_{X}^{1}} &X^{2} \ar[r] &\cdots
				\ar[r]^{d_{X}^{n-2}} &X^{n-1} \ar[r]^{d_{X}^{n-1}} &T(X^{0}) .}$$
		
		The definition is reasonable. We just need to verify that the rightmost square is commutative.
		\begin{center}
			$T(g^{0}) \circ \omega_{C}=\omega_{X^{0}}\circ g^{0}=(d_{X}^{n-1}\circ \dots \circ d_{X}^{0})\circ g^{0}=d_{X}^{n-1}\circ g^{n-1}$.
		\end{center}
		
		We can verify $\Phi$ is a natural isomorphism. Firstly, define a morphism
		\begin{center}
		$\Psi:{\rm Hom}_{\rm {Fact}_{n}(\mathcal{A},T,\omega)}(\theta^{0}(C),X) \rightarrow {\rm Hom}_{\mathcal{A}}(C,X^{0}),\ \ \ \ \ (g^{0},g^{1},\dots,g^{n-1}) \mapsto g^{0}$.\\
		\end{center}
		 By the commutativity of the morphisms between $n$-fold $(\mathcal{A},T)$-factorizations, once $g^{0}$ is determined, the other morphisms will also be correspondingly uniquely determined. Thus, we can verify that $\Phi$ and $\Psi$ are inverses of each other. Additionally, for each object $C_{1} \in \mathcal{A}, X_{1} \in \text{Fact}_{n}(\mathcal{A},T,\omega), f:C_{1} \rightarrow C, h=(h^{0},h^{1},\dots,h^{n-1}):X\rightarrow X_{1}$, the following diagram is commutative. Let $g$ be a morphism in ${\rm Hom}_{\mathcal{A}}(C,X^{0})$.
$$\xymatrix{{\rm Hom}_{\mathcal{A}}(C,X^{0}) \ar[r]^(.4){\Phi_{C,X}}\ar[d]^{f^{*}h^{0}_{*}}& {\rm Hom}_{{\rm Fact}_{n}(\mathcal{A},T,\omega)}(\theta^{0}(C),X) \ar[d]^{(\theta^{0}(f))^{*}h_{*}}\\
				{\rm Hom}_{\mathcal{A}}(C_{1},X_{1}^{0}) \ar[r]^(.4){\Phi_{C_{1},X_{1}}}&{\rm Hom}_{{\rm Fact}_{n}(\mathcal{A},T,\omega)}(\theta^{0}(C_{1}),X_{1})  .}$$
		
		Anticlockwise:
		$\Phi_{C_{1},X_{1}}\circ (f^{*}h^{0}_{*})(g)=\Phi_{C_{1},X_{1}}(h^{0}gf)\\
		=(h^{0}gf, d_{X_{1}}^{0} h^{0}gf, \dots, d_{X_{1}}^{n-2} \dots  d_{X_{1}}^{0} h^{0}gf)$.
		
		Clockwise:
		$((\theta^{0}(f))^{*}h_{*})\circ \Phi_{C,X}(g)
		=((\theta^{0}(f))^{*}h_{*})(g,d_{X}^{0} g, \dots, d_{X}^{n-2} \dots d_{X}^{0}g)\\
		=(h^{0}gf, h^{1} d_{X}^{0} gf, \dots, h^{n-1} d_{X}^{n-2} \dots  d_{X}^{0} gf)\\
		=(h^{0}gf, d_{X_{1}}^{0}h^{0}gf, \dots, d_{X_{1}}^{n-2} \dots  d_{X_{1}}^{0}h^{0}gf)$.
		
		Therefore $(\theta^{0},{\rm pr}^{0})$ is an adjoint pair.
	
	\item [(2)]
		Define a morphism
		\begin{center}
			$\Phi_{C,X}:\text{Hom}_{\mathcal{A}}(C,X^{1}) \rightarrow \text{Hom}_{\text{Fact}_{n}(\mathcal{A},T,\omega)}(\theta^{1}(C),X),\ \ \ \ \ g \mapsto (g^{0},g^{1},\dots,g^{n-1})$,\\
		\end{center}
		where $g^{0}=\epsilon_{X^{0}}\circ T^{-1}(d_{X}^{n-1}\circ \dots \circ d_{X}^{1}\circ g)$,
			$g^{1}=g$ and
			$g^{j}=d_{X}^{j-1}\circ d_{X}^{j-2}\circ \dots \circ d_{X}^{1}\circ g$ for $j=2,\dots, n-1$.
$$\xymatrix{T^{-1}C \ar[r]^(.6){\omega^{(-1)}_{C}}\ar[d]^{g^{0}}&C \ar[r]^{\rm Id_{C}}\ar[d]^{g^{1}=g} &C
				\ar[r]\ar[d]^{g^{2}} &\cdots
				\ar[r]^{\rm Id_{C}} &C \ar[r]^{\eta_{C}}\ar[d]^{g{n-1}} &TT^{-1}C \ar[d]^{T(g^{0})}\\
				X^{0} \ar[r]^{d_{X}^{0}}&X^{1} \ar[r]^{d_{X}^{1}} &X^{2} \ar[r] &\cdots
				\ar[r]^{d_{X}^{n-2}} &X^{n-1} \ar[r]^{d_{X}^{n-1}} &T(X^{0})   .}$$
		
		The definition is reasonable. We just need to verify that the leftmost and rightmost squares are commutative.
		
		$d_{X}^{0}\circ g^{0}=d_{X}^{0}\circ \epsilon_{X^{0}}\circ T^{-1}(d_{X}^{n-1}\circ \dots \circ d_{X}^{1}\circ g)
		=\epsilon_{X^{1}}\circ T^{-1}T(d_{X}^{0})\circ T^{-1}(d_{X}^{n-1}\circ \dots \circ d_{X}^{1}\circ g)
		=\epsilon_{X^{1}}\circ T^{-1}(\omega_{X^{1}}\circ g)
		=\epsilon_{X^{1}}\circ T^{-1}(T(g)\circ \omega_{C})
		=\epsilon_{X^{1}}\circ T^{-1}T(g)\circ T^{-1}(\omega_{C})
		=g\circ \epsilon_{C} \circ T^{-1}\omega_{C}
		=g\circ \omega_{C}^{(-1)}$.

		$T(g^{0})\circ \eta_{C}=T(\epsilon_{X^{0}}\circ T^{-1}(d_{X}^{n-1}\circ \dots \circ d_{X}^{1}\circ g))\circ \eta_{C}
		=T(\epsilon_{X^{0}})\circ TT^{-1}(d_{X}^{n-1}\circ \dots \circ d_{X}^{1}\circ g)\circ \eta_{C}
		=T(\epsilon_{X^{0}})\circ \eta_{T(X^{0})} \circ (d_{X}^{n-1}\circ \dots \circ d_{X}^{1}\circ g)
		=d_{X}^{n-1}\circ \dots \circ d_{X}^{1}\circ g$.
		
		We can verify $\Phi$ is a natural isomorphism. Define a morphism
		\begin{center}
			$\Psi_{C,X}:{\rm Hom}_{{\rm Fact}_{n}(\mathcal{A},T,\omega)}(\theta^{1}(C),X)\rightarrow \text{Hom}_{\mathcal{A}}(C,X^{1}) ,\ \ \ \ \ (g^{0},g^{1},\dots,g^{n-1}) \mapsto g^{1}$.\\
		\end{center}
		In fact, when $g^{1}:C \rightarrow X^{1}$ is fixed, the other morphisms are correspondingly uniquely determined by the commutativity of the morphisms between $n$-fold $(\mathcal{A},T)$-factorizations. When $2 \leq j \leq n-1$, this is obvious. We mainly prove that $g^{0}$ is uniquely determined by $g^{1}$.
		\begin{center}
			 $T(g^{0}) \circ \eta_{C}=d_{X}^{n-1}\circ g^{n-1}=d_{X}^{n-1}d_{X}^{n-2}\circ \dots \circ d_{X}^{1}g^{1}$.
		\end{center}
		\begin{center}
			$T(g^{0})=(d_{X}^{n-1}d_{X}^{n-2}\circ \dots \circ d_{X}^{1}g^{1}) \eta_{C}^{-1}$.
		\end{center} By the naturality of $\epsilon$, $g^{0}\circ \epsilon_{T^{-1}C}=\epsilon_{X^{0}}\circ T^{-1}T(g^{0})=\epsilon_{X^{0}}\circ T^{-1}(d_{X}^{n-1}d_{X}^{n-2}\circ \dots \circ d_{X}^{1}g^{1}\eta_{C}^{-1})$.
		
		$g^{0}=\epsilon_{X^{0}}\circ T^{-1}(d_{X}^{n-1}d_{X}^{n-2}\circ \dots \circ d_{X}^{1}g^{1}\eta_{C}^{-1})\circ \epsilon_{T^{-1}C}^{-1}\\
		=\epsilon_{X^{0}}\circ T^{-1}(d_{X}^{n-1}d_{X}^{n-2}\circ \dots \circ d_{X}^{1}g^{1})\circ T^{-1}(\eta_{C}^{-1})\circ \epsilon_{T^{-1}C}^{-1}\\
		=\epsilon_{X^{0}}\circ T^{-1}(d_{X}^{n-1}d_{X}^{n-2}\circ \dots \circ d_{X}^{1}g^{1}).$
		
		Therefore, $\Phi$ and $\Psi$ are inverses of each other.
		Additionally, for each object $C_{1} \in \mathcal{A}, X_{1} \in \text{Fact}_{n}(\mathcal{A},T,\omega), f:C_{1} \rightarrow C, h=(h^{0},h^{1},\dots,h^{n-1}):X\rightarrow X_{1}$. The following diagram is commutative. Let $g$ be a morphism in ${\rm Hom}_{\mathcal{A}}(C,X^{1})$.
$$\xymatrix{{\rm Hom}_{\mathcal{A}}(C,X^{1}) \ar[r]^(.4){\Phi_{C,X}}\ar[d]^{f^{*}h^{1}_{*}}& {\rm Hom}_{{\rm Fact}_{n}(\mathcal{A},T,\omega)}(\theta^{1}(C),X) \ar[d]^{(\theta^{1}(f))^{*}h_{*}}\\
				{\rm Hom}_{\mathcal{A}}(C_{1},X_{1}^{1}) \ar[r]^(.4){\Phi_{C_{1},X_{1}}}&{\rm Hom}_{{\rm Fact}_{n}(\mathcal{A},T,\omega)}(\theta^{1}(C_{1}),X_{1})   .}$$
		
		Anticlockwise: $\Phi_{C_{1},X_{1}}\circ (f^{*}h^{1}_{*})(g) =\Phi_{C_{1},X_{1}}(h^{1}gf)\\
		=(\epsilon_{X_{1}^{0}}\circ T^{-1}(d_{X_{1}}^{n-1} \dots d_{X_{1}}^{1}  h^{1}gf), h^{1}gf, d_{X_{1}}^{1}  h^{1}gf, \dots, d_{X_{1}}^{n-2} \dots d_{X_{1}}^{1} h^{1}gf)$.
		
		Clockwise: $((\theta^{1}(f))^{*}h_{*})\circ \Phi_{C,X}(g)\\
		=(\theta^{1}(f))^{*}h_{*})(\epsilon_{X^{0}}\circ T^{-1}(d_{X}^{n-1} \dots  d_{X}^{1}  g),g,d_{X}^{1} g,\dots,d_{X}^{n-2} \dots  d_{X}^{1}  g )\\
		=(h^{0}(\epsilon_{X^{0}}\circ T^{-1}(d_{X}^{n-1} \dots  d_{X}^{1}  g)) T^{-1}(f),h^{1}gf,h^{2}d_{X}^{1} g f,\dots,h^{n-1}d_{X}^{n-2}  \dots d_{X}^{1}  gf )\\
		=(\epsilon_{X_{1}^{0}}\circ T^{-1}(d_{X_{1}}^{n-1}  \dots  d_{X_{1}}^{1}  h^{1}gf), h^{1}gf, d_{X_{1}}^{1}  h^{1}gf, \dots, d_{X_{1}}^{n-2}  \dots   d_{X_{1}}^{1}  h^{1}gf)$.
		
		Therefore $(\theta^{1},{\rm pr}^{1})$ is an adjoint pair.
		
	\item [(3)]
		Define a morphism
		\begin{center}
			$\Phi_{X,C}:{\rm Hom}_{\mathcal{A}}(X^{n-1},C) \rightarrow {\rm Hom}_{\text{Fact}_{n}(\mathcal{A},T,\omega)}(X,\theta^{0}(C)),\ \ \ \ \ g \mapsto (g^{0},g^{1},\dots,g^{n-1})$,\\
		\end{center}
		where $g^{j}=g\circ d_{X}^{n-2}\circ \dots \circ d_{X}^{j}$ for $0 \leq j \leq n-2$ and
			$g^{n-1}=g$.
$$\xymatrix{X^{0} \ar[r]^{d_{X}^{0}}\ar[d]^{g^{0}}&X^{1} \ar[r]^{d_{X}^{1}}\ar[d]^{g^{1}} &X^{2}
				\ar[r]\ar[d]^{g^{2}} &\cdots
				\ar[r]^{d_{X}^{n-2}} &X^{n-1} \ar[r]^{d_{X}^{n-1}}\ar[d]^{g{n-1}=g} &T(X^{0}) \ar[d]^{T(g^{0})}\\
				C \ar[r]^{\rm Id_{C}}&C \ar[r]^{\rm Id_{C}} &C
				\ar[r]&\cdots
				\ar[r]^{\rm Id_{C}} &C \ar[r]^{\omega_{C}}&T(C)   .}$$
		The definition is reasonable. We just need to verify that the rightmost square is commutative.
		
		$T(g^{0})\circ d_{X}^{n-1}
		=T(g\circ d_{X}^{n-2}\circ \dots \circ d_{X}^{0})\circ d_{X}^{n-1}
		=T(g)\circ T(d_{X}^{n-2})\circ T(d_{X}^{n-1})\circ \dots \circ T(d_{X}^{0})\circ d_{X}^{n-1}
		=T(g)\circ \omega_{X^{n-1}}
		=\omega_{C} \circ g$.
		
		We can verify $\Phi$ is a natural isomorphism. Define a morphism
		\begin{center}
			$\Psi_{X,C}:{\rm Hom}_{{\rm Fact}_{n}(\mathcal{A},T,\omega)}(X,\theta^{0}(C)) \rightarrow {\rm Hom}_{\mathcal{A}}(X^{n-1},C),\ \ \ \ \ (g^{0},g^{1},\dots,g^{n-1}) \mapsto g^{n-1}$.
		\end{center}
		In fact, for any morphism $(g^{0},g^{1},\dots,g^{n-1})$ in ${\rm Hom}_{{\rm Fact}_{n}(\mathcal{A},T,\omega)}(X,\theta^{0}(C))$, the following equality is always maintained.
		\begin{center}
			$g^{j}=g^{n-1}d_{X}^{n-2}\circ \dots \circ d_{X}^{j}$, for any $0 \leq j \leq n-2$.
		\end{center}
		This indicates that when the morphism $g^{n-1}:X^{n-1}\rightarrow C$ is fixed, the other morphisms are correspondingly uniquely determined by the commutativity of the morphisms between $n$-fold $(\mathcal{A},T)$-factorizations. Thus, $\Phi$ and $\Psi$ are inverses of each other.
		
		For each object $C_{1} \in \mathcal{A}, X_{1} \in \text{Fact}_{n}(\mathcal{A},T,\omega), f:C \rightarrow C_{1}, h=(h^{0},h^{1},\dots,h^{n-1}):X_{1}\rightarrow X$. The following diagram is commutative. Let $g$ be a morphism in ${\rm Hom}_{\mathcal{A}}(X^{n-1},C)$.
$$\xymatrix{{\rm Hom}_{\mathcal{A}}(X^{n-1},C) \ar[r]^(.4){\Phi_{X,C}}\ar[d]^{(h^{n-1})^{*}f_{*}}& {\rm Hom}_{{\rm Fact}_{n}(\mathcal{A},T,\omega)}(X,\theta^{0}(C)) \ar[d]^{h^{*}(\theta^{0}(f))_{*}}\\
				{\rm Hom}_{\mathcal{A}}(X_{1}^{n-1},C_{1}) \ar[r]^(.4){\Phi_{X_{1},C_{1}}}&{\rm Hom}_{{\rm Fact}_{n}(\mathcal{A},T,\omega)}(X_{1},\theta^{0}(C_{1}))   .}$$
		
		Anticlockwise: $\Phi_{X_{1},C_{1}}\circ (h^{n-1})^{*}f_{*}(g) =\Phi_{X_{1},C_{1}}(fgh^{n-1}) \\
		=(fgh^{n-1} d_{X_{1}}^{n-2} \dots  d_{X_{1}}^{0},fgh^{n-1} d_{X_{1}}^{n-2} \dots d_{X_{1}}^{1}, \dots, fgh^{n-1})$.
		
		Clockwise: $(h^{*}(\theta^{0}(f))_{*}\circ \Phi_{X,C}(g)\\
		=(h^{*}(\theta^{0}(f))_{*}(g d_{X}^{n-2} \dots d_{X}^{0},g d_{X}^{n-2} \dots d_{X}^{1}, \dots, g)\\
		=(fg d_{X}^{n-2} \dots d_{X}^{0}h^{0},fg d_{X}^{n-2} \dots d_{X}^{1}h^{1}, \dots, fgh^{n-1})\\
		=(fgh^{n-1} d_{X_{1}}^{n-2} \dots d_{X_{1}}^{0},fgh^{n-1} d_{X_{1}}^{n-2} \dots d_{X_{1}}^{1}, \dots, fgh^{n-1})$.
		
		Therefore $({\rm pr}^{n-1},\theta^{0})$ is an adjoint pair.

	\item [(4)]
		Define a morphism
		\begin{center}
			$\Phi_{X,C}:{\rm Hom}_{\mathcal{A}}(T(X^{0}),C) \rightarrow {\rm Hom}_{{\rm Fact}_{n}(\mathcal{A},T,\omega)}(X,\theta^{1}(C)),\ \ \ \ \ g \mapsto (g^{0},g^{1},\dots,g^{n-1})$,\\
		\end{center}
		where $g^{0}=T^{-1}(g)\circ \epsilon_{X^{0}}^{-1}$ and
			$g^{j}=g\circ d_{X}^{n-1}\circ \dots \circ d_{X}^{j}$ for $1 \leq j \leq n-1$.
$$\xymatrix{X^{0} \ar[r]^{d_{X}^{0}}\ar[d]^{g^{0}}&X^{1} \ar[r]^{d_{X}^{1}}\ar[d]^{g^{1}} &X^{2}
				\ar[r]\ar[d]^{g^{2}} &\cdots
				\ar[r]^{d_{X}^{n-2}} &X^{n-1} \ar[r]^{d_{X}^{n-1}}\ar[d]^{g{n-1}} &T(X^{0}) \ar[d]^{T(g^{0})}\\
				T^{-1}C \ar[r]^(.6){\omega_{C}^{(-1)}}&C \ar[r]^{\rm Id_{C}} &C
				\ar[r]&\cdots
				\ar[r]^{\rm Id_{C}} &C \ar[r]^{\eta_{C}}&TT^{-1}C    .}$$
			
		The definition is reasonable. We just need to verify that the leftmost and the rightmost squares are commutative.
		
		$\omega_{C}^{(-1)} \circ g^{0}
		=\omega_{C}^{(-1)} \circ T^{-1}(g)\circ \epsilon_{X^{0}}^{-1}
		=g\circ \omega_{T(X^{0})}^{(-1)}\circ \epsilon_{X^{0}}^{-1}
		=g\circ \omega_{X^{0}}
		=g\circ d_{X}^{n-1}\circ \dots \circ d_{X}^{0}
		=g^{1}\circ d_{X}^{0}$.
		
		$T(g^{0})\circ d_{X}^{n-1}
		=T(T^{-1}(g)\circ \epsilon_{X^{0}}^{-1})\circ d_{X}^{n-1}
		=TT^{-1}(g)\circ T(\epsilon_{X^{0}}^{-1}) \circ d_{X}^{n-1}
		=TT^{-1}(g)\circ \eta_{T(X^{0})} \circ d_{X}^{n-1}
		=\eta_{C} \circ g\circ d_{X}^{n-1}
		=\eta_{C} \circ g^{n-1}$.
		
		We can verify $\Phi$ is a natural isomorphism.
		Define a morphism
		\begin{center}
			$\Psi_{X,C}:{\rm Hom}_{{\rm Fact}_{n}(\mathcal{A},T,\omega)}(X,\theta^{1}(C)) \rightarrow {\rm Hom}_{\mathcal{A}}(T(X^{0}),C),\ (g^{0},g^{1},\dots,g^{n-1}) \mapsto \eta_{C}^{-1}\circ T(g^{0})$,\\
		\end{center}
		with the equalities $\eta_{C}\circ g^{n-1}=T(g^{0})\circ d_{X}^{n-1}, g^{j}=g^{n-1}d_{X}^{n-2}\dots d_{X}^{j}$ for $1 \leq j \leq n-2$ and $\omega_{C}^{(-1)} \circ g^{0}=g^{1} d_{X}^{0}$.
		
		For any morphism $g \in {\rm Hom}_{\mathcal{A}}(T(X^{0}),C)$,
		
		$\Psi \Phi(g)=\Psi(T^{-1}(g)\circ \epsilon_{X^{0}}^{-1},g\circ d_{X}^{n-1}\circ \dots \circ d_{X}^{1},\dots,g\circ d_{X}^{n-1})\\
		=\eta_{C}^{-1}\circ T(T^{-1}(g)\circ \epsilon_{X^{0}}^{-1})\\
		=g \circ \eta_{T(X^{0})}^{-1} \circ T(\epsilon_{X^{0}}^{-1})=g.$
		
		For any morphism $(g^{0},g^{1},\dots, g^{n-1}) \in {\rm Hom}_{{\rm Fact}_{n}(\mathcal{A},T,\omega)}(X,\theta^{1}(C))$,
		
		$\Phi\Psi(g^{0},g^{1},\dots, g^{n-1})=\Phi(\eta_{C}^{-1}\circ T(g^{0}))\\
		=(T^{-1}(\eta_{C}^{-1}\circ T(g^{0}))\circ \epsilon_{X^{0}}^{-1},\eta_{C}^{-1}\circ T(g^{0})d_{X}^{n-1}\dots d_{X}^{1},\dots ,\eta_{C}^{-1}\circ T(g^{0})d_{X}^{n-1})\\
		=(\epsilon_{T^{-1}C}\circ T^{-1}T(g^{0})\circ \epsilon_{X^{0}}^{-1},\eta_{C}^{-1}\circ T(g^{0})d_{X}^{n-1}\dots d_{X}^{1},\dots ,\eta_{C}^{-1}\circ T(g^{0})d_{X}^{n-1})\\
		=(g^{0}\circ \epsilon_{X^{0}}\epsilon_{X^{0}}^{-1},g^{n-1}d_{X}^{n-2}\dots d_{X}^{1},\dots,g^{n-1})\\
		=(g^{0},g^{1},\dots, g^{n-1})$.
		
		For each object $C_{1} \in \mathcal{A}, X_{1} \in \text{Fact}_{n}(\mathcal{A},T,\omega), f:C \rightarrow C_{1}, h=(h^{0},h^{1},\dots,h^{n-1}):X_{1}\rightarrow X$. The following diagram is commutative. Let $g$ be a morphism in ${\rm Hom}_{\mathcal{A}}(T(X^{0}),C)$.
$$\xymatrix{{\rm Hom}_{\mathcal{A}}(T(X^{0}),C) \ar[r]^(.4){\Phi_{X,C}}\ar[d]^{T(h^{0})^{*}f_{*}}& {\rm Hom}_{{\rm Fact}_{n}(\mathcal{A},T,\omega)}(X,\theta^{1}(C)) \ar[d]^{h^{*}(\theta^{1}(f))_{*}}\\
				{\rm Hom}_{\mathcal{A}}(T(X_{1}^{0}),C_{1}) \ar[r]^(.4){\Phi_{X_{1},C_{1}}}&{\rm Hom}_{{\rm Fact}_{n}(\mathcal{A},T,\omega)}(X_{1},\theta^{1}(C_{1}))      .}$$
		
		Anticlockwise: $\Phi_{X_{1},C_{1}}\circ (T(h^{0})^{*}f_{*})(g) =\Phi_{X_{1},C_{1}}(fgT(h^{0})) \\
		=(T^{-1}(fgT(h^{0})) \epsilon_{X_{1}^{0}}^{-1},fgT(h^{0}) d_{X_{1}}^{n-1} \dots  d_{X_{1}}^{1}, \dots, fgT(h^{0}) d_{X_{1}}^{n-1})$.
		
		Clockwise: $(h^{*}(\theta^{1}(f))_{*})\circ \Phi_{X,C}(g)\\
		=(h^{*}(\theta^{1}(f))_{*})(T^{-1}(g) \epsilon_{X^{0}}^{-1},g d_{X}^{n-1} \dots  d_{X}^{1}, \dots, g d_{X}^{n-1})\\
		=(T^{-1}(f) T^{-1}(g)  \epsilon_{X^{0}}^{-1}  h^{0},f  g  d_{X}^{n-1}  \dots  d_{X}^{1}  h^{1}, \dots, f g  d_{X}^{n-1}  h^{n-1})\\
		=((T^{-1}(fgT(h^{0}))  \epsilon_{X_{1}^{0}}^{-1},fgT(h^{0}) d_{X_{1}}^{n-1} \dots   d_{X_{1}}^{1}, \dots, fgT(h^{0})  d_{X_{1}}^{n-1})$.
		
		Therefore $({\rm pr}^{n-1}S,\theta^{1})$ is an adjoint pair.
\end{itemize}
\end{proof}

In fact, regarding functors $\theta^{j}$, ${\rm pr}^{j}$ and adjoint pairs, there is a more detailed introduction in \cite{SZ}. What we have mentioned in this paper happens to be some specific cases among them.

\begin{lemma} \label{lem4}
	Let $T$ be an exact functor. For any projective objects $P$, $Q$ and injective objects $I$, $J$ in $\mathcal{A}$, the following statements hold.
	\begin{itemize}
		\item [\rm (1)]
		$\theta^{0}(P)$ and $\theta^{1}(Q)$ are projective.
		\item [\rm (2)]
		$\theta^{0}(I)$ and $\theta^{1}(J)$ are injective.
	\end{itemize}
\end{lemma}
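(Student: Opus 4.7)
The plan is to leverage the adjoint pairs established in Lemma~\ref{lem3}, together with the general categorical principle that a left adjoint sends projectives to projectives whenever its right adjoint is exact, and dually a right adjoint sends injectives to injectives whenever its left adjoint is exact. Both principles follow at once from the adjunction isomorphism $\mathrm{Hom}(F(A),B)\cong\mathrm{Hom}(A,G(B))$: applying the representable functor $\mathrm{Hom}(F(A),-)$ to a conflation is the same as applying $\mathrm{Hom}(A,-)$ to its image under $G$, so if $A$ is projective and $G$ preserves deflations, then $F(A)$ is projective, and dually.

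The first key ingredient is that each projection functor ${\rm pr}^{j}\colon {\rm Fact}_{n}(\mathcal{A},T,\omega)\to\mathcal{A}$ is exact. This is essentially tautological, since the exact structure $\widetilde{\mathcal E}$ was defined so that a pair $(l,p)$ belongs to $\widetilde{\mathcal E}$ precisely when each component $(l^{j},p^{j})$ is a conflation in $\mathcal E$, and ${\rm pr}^{j}$ just extracts this component. The second ingredient is that the shift functor $S$ is exact, and here is the only place where the hypothesis on $T$ enters. Given $(l,p)\in\widetilde{\mathcal E}$, the components of $(S(l),S(p))$ are $(l^{1},p^{1}),\dots,(l^{n-1},p^{n-1}),(T(l^{0}),T(p^{0}))$; the first $n-1$ are conflations by definition of $\widetilde{\mathcal E}$, and the last one is a conflation because $T$ is exact. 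Hence ${\rm pr}^{n-1}\circ S$ is also exact as a composition of exact functors.

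With these exactness facts established, the lemma follows formally. For~(1), the adjunctions $(\theta^{0},{\rm pr}^{0})$ and $(\theta^{1},{\rm pr}^{1})$ of Lemma~\ref{lem3}, together with the exactness of the right adjoints, imply that $\theta^{0}(P)$ and $\theta^{1}(Q)$ are projective in ${\rm Fact}_{n}(\mathcal{A},T,\omega)$. For~(2), the adjunctions $({\rm pr}^{n-1},\theta^{0})$ and $({\rm pr}^{n-1}S,\theta^{1})$, together with the exactness of the left adjoints, imply that $\theta^{0}(I)$ and $\theta^{1}(J)$ are injective.

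The argument is almost purely formal once Lemma~\ref{lem3} is in place, so the main (and in fact only) substantive step is the verification that $S$ is exact; this is exactly where the assumption that $T$ be an exact functor is used, and explains why the hypothesis is stated. Notably, the projective half of the statement does not require any exactness assumption on $T$ beyond what is built into the setting, while the injective half does.
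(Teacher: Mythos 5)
Your proposal is correct and follows essentially the same route as the paper: the paper likewise invokes the adjunctions of Lemma~\ref{lem3} and transports the Hom-sequences of a conflation across the adjunction isomorphisms (which is precisely the general principle you cite), with the exactness of $T$ entering only to guarantee that $T(A^{0})\to T(B^{0})\to T(C^{0})$ is a conflation in the $({\rm pr}^{n-1}S,\theta^{1})$ case. Your additional remark correctly isolates where the hypothesis on $T$ is actually used (in fact only for $\theta^{1}(J)$, since the left adjoint ${\rm pr}^{n-1}$ for $\theta^{0}(I)$ is exact tautologically).
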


\begin{proof}
	\begin{itemize}
		\item [(1)] Let $A\rightarrow B\rightarrow C$ be a conflation in ${\rm Fact}_{n}(\mathcal{A},T,\omega)$. In detail, for each $j$, the sequence $A^{j}\rightarrow B^{j}\rightarrow C^{j}$ is a conflation in $\mathcal{A}$. By the adjoint pairs $(\theta^{0},{\rm pr}^{0})$ and $(\theta^{1},{\rm pr}^{1})$ of Lemma \ref{lem3}, we have the following sequences
$$\small \xymatrix{{\rm Hom}_{{\rm Fact}_{n}(\mathcal{A},T,\omega)}(\theta^{0}(P),A) \ar[r]\ar[d]^{\cong}&{\rm Hom}_{{\rm Fact}_{n}(\mathcal{A},T,\omega)}(\theta^{0}(P),B) \ar[r]\ar[d]^{\cong} &{\rm Hom}_{{\rm Fact}_{n}(\mathcal{A},T,\omega)}(\theta^{0}(P),C)
				\ar[d]^{\cong}\\
				{\rm Hom}_{\mathcal{A}}(P,A^{0}) \ar[r]&{\rm Hom}_{\mathcal{A}}(P,B^{0}) \ar[r] &{\rm Hom}_{\mathcal{A}}(P,C^{0})   .}$$
		
$$\small \xymatrix{{\rm Hom}_{{\rm Fact}_{n}(\mathcal{A},T,\omega)}(\theta^{1}(Q),A) \ar[r]\ar[d]^{\cong}&{\rm Hom}_{{\rm Fact}_{n}(\mathcal{A},T,\omega)}(\theta^{1}(Q),B) \ar[r]\ar[d]^{\cong} &{\rm Hom}_{{\rm Fact}_{n}(\mathcal{A},T,\omega)}(\theta^{1}(Q),C)
				\ar[d]^{\cong}\\
				{\rm Hom}_{\mathcal{A}}(Q,A^{1}) \ar[r]&{\rm Hom}_{\mathcal{A}}(Q,B^{1}) \ar[r] &{\rm Hom}_{\mathcal{A}}(Q,C^{1})   .}$$

		Since $P$, $Q$ are projective, the second rows are conflations. Hence the first rows are also conflations. This yields that $\theta^{0}(P)$ and $\theta^{1}(Q)$ are projective.
		\item [(2)]
		Let $A\rightarrow B\rightarrow C$ be a conflation in ${\rm Fact}_{n}(\mathcal{A},T,\omega)$. In detail, for each $j$, the sequence $A^{j}\rightarrow B^{j}\rightarrow C^{j}$ is a conflation in $\mathcal{A}$. By the adjoint pairs $({\rm pr}^{n-1},\theta^{0})$ and $({\rm pr}^{n-1}S,\theta^{1})$ of Lemma \ref{lem3}, we have the following sequences
$$\small \xymatrix{{\rm Hom}_{\mathcal{A}}(C^{n-1},I) \ar[r]\ar[d]^{\cong}&{\rm Hom}_{\mathcal{A}}(B^{n-1},I)  \ar[r]\ar[d]^{\cong} &{\rm Hom}_{\mathcal{A}}(A^{n-1},I)
				\ar[d]^{\cong}\\
				{\rm Hom}_{{\rm Fact}_{n}(\mathcal{A},T,\omega)}(C,\theta^{0}(I)) \ar[r]&{\rm Hom}_{{\rm Fact}_{n}(\mathcal{A},T,\omega)}(B,\theta^{0}(I)) \ar[r] &{\rm Hom}_{{\rm Fact}_{n}(\mathcal{A},T,\omega)}(A,\theta^{0}(I))  .}$$
		
$$\small \xymatrix{{\rm Hom}_{\mathcal{A}}(T(C^{0}),J) \ar[r]\ar[d]^{\cong}&{\rm Hom}_{\mathcal{A}}(T(B^{0}),J)  \ar[r]\ar[d]^{\cong} &{\rm Hom}_{\mathcal{A}}(T(A^{0}),J)
				\ar[d]^{\cong}\\
				{\rm Hom}_{{\rm Fact}_{n}(\mathcal{A},T,\omega)}(C,\theta^{1}(J)) \ar[r]&{\rm Hom}_{{\rm Fact}_{n}(\mathcal{A},T,\omega)}(B,\theta^{1}(J)) \ar[r] &{\rm Hom}_{{\rm Fact}_{n}(\mathcal{A},T,\omega)}(A,\theta^{1}(J))   .}$$

		Since $I$, $J$ are injective and $T$ is an exact functor, the first rows are conflations. Hence the second rows are also conflations. This yields that $\theta^{0}(I)$ and $\theta^{1}(J)$ are injective.
	\end{itemize}
\end{proof}

\begin{lemma} \label{lem5}
	Assume that $\mathcal{A}$ has enough projective objects. Then so does ${\rm Fact}_{n}(\mathcal{A},T,\omega)$. Moreover, an $n$-fold $(\mathcal{A},T)$-factorization $X$ of $\omega$ is projective if and only if it is a direct summand of $\theta^{0}(P^{0}) \oplus \theta^{1}(P^{1})$ for some projective objects $P^{0}$ and $P^{1}$ in $\mathcal{A}$.
\end{lemma}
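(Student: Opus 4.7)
The plan is to manufacture a projective cover of every object out of the two adjoint constructions $\theta^{0},\theta^{1}$, and then deduce the characterization of projectives by the usual splitting argument.

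First, given $X\in{\rm Fact}_{n}(\mathcal{A},T,\omega)$, I would use the assumption that $\mathcal{A}$ has enough projectives to pick deflations $p^{0}\colon P^{0}\twoheadrightarrow X^{0}$ and $p^{1}\colon P^{1}\twoheadrightarrow X^{1}$ with $P^{0},P^{1}$ projective in $\mathcal{A}$. By the adjunction $(\theta^{0},{\rm pr}^{0})$ of Lemma~\ref{lem3}, the map $p^{0}$ corresponds to $\alpha\colon\theta^{0}(P^{0})\to X$ with components $\alpha^{0}=p^{0}$ and $\alpha^{j}=d_{X}^{j-1}\circ\cdots\circ d_{X}^{0}\circ p^{0}$ for $j\geq 1$, while the adjunction $(\theta^{1},{\rm pr}^{1})$ promotes $p^{1}$ to $\beta\colon\theta^{1}(P^{1})\to X$ with $\beta^{1}=p^{1}$, $\beta^{j}=d_{X}^{j-1}\circ\cdots\circ d_{X}^{1}\circ p^{1}$ for $j\geq 2$, and $\beta^{0}=\epsilon_{X^{0}}\circ T^{-1}(d_{X}^{n-1}\circ\cdots\circ d_{X}^{1}\circ p^{1})$. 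The candidate cover is then
\[
\xi := (\alpha,\beta)\colon \theta^{0}(P^{0})\oplus\theta^{1}(P^{1})\longrightarrow X,
\]
whose source is projective by Lemma~\ref{lem4}(1), and the task reduces to verifying $\xi\in\tilde{\mathcal{E}}$.

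The verification is componentwise, and at each position the key tool is the standard observation that any matrix $[f,g]\colon A\oplus B\to C$ in $\mathcal{A}$ whose first block $f$ is a deflation is itself a deflation, since it factors as $A\oplus B\xrightarrow{f\oplus\mathrm{Id}_{B}}C\oplus B\xrightarrow{[\mathrm{Id}_{C},g]}C$ in which the first map is a direct sum of deflations and the second is split. At position $0$, $\xi^{0}=[p^{0},\beta^{0}]$ is a deflation because $p^{0}$ is; the analogous argument at position $1$, with $p^{1}$ in the second block, handles $\xi^{1}$. For intermediate positions $j\geq 2$ one has the identity $\xi^{j}=(d_{X}^{j-1}\circ\cdots\circ d_{X}^{1})\circ\xi^{1}$, and the deflation property must be extracted from this factorization together with the closure axioms of $\tilde{\mathcal{E}}$ and the factorization relations of $X$.

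For the characterization of projectives, the "if" direction is immediate from Lemma~\ref{lem4}(1) combined with the closure of projectivity under direct summands. Conversely, if $X$ is projective, then the deflation $\xi\colon\theta^{0}(P^{0})\oplus\theta^{1}(P^{1})\twoheadrightarrow X$ constructed above splits (every deflation onto a projective in an exact category splits), so that $X$ is realized as a direct summand of $\theta^{0}(P^{0})\oplus\theta^{1}(P^{1})$. The main obstacle throughout is the verification of the deflation property at positions $j\geq 2$: here the formulas for $\xi^{j}$ involve iterated differentials of $X$, and one must argue carefully within the componentwise exact structure $\tilde{\mathcal{E}}$. In the $n=2$ setting treated by Chen~\cite{C} this subtlety does not appear, since only positions $0$ and $1$ are present, and the present proof generalizes that argument.
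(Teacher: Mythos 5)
Your construction is the same as the paper's: lift deflations $P^{0}\twoheadrightarrow X^{0}$ and $P^{1}\twoheadrightarrow X^{1}$ through the adjunctions $(\theta^{0},{\rm pr}^{0})$ and $(\theta^{1},{\rm pr}^{1})$ of Lemma~\ref{lem3}, combine them into $\xi\colon\theta^{0}(P^{0})\oplus\theta^{1}(P^{1})\to X$, invoke Lemma~\ref{lem4}(1), and split when $X$ is projective. Your treatment of positions $0$ and $1$, the ``if'' direction, and the splitting argument are all correct. But the step you yourself flag as ``the main obstacle'' is a genuine gap, not a technicality to be handled by arguing carefully: at positions $2\le j\le n-1$ the identity $\xi^{j}=(d_{X}^{j-1}\circ\cdots\circ d_{X}^{1})\circ\xi^{1}$ puts the known deflation $\xi^{1}$ on the wrong side of the composite. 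Deflations are closed under composition and under pullback, but $u\circ e$ with $e$ a deflation is not a deflation unless $u$ is, and the iterated differential $d_{X}^{j-1}\circ\cdots\circ d_{X}^{1}$ need not be a deflation; neither the closure axioms of $\tilde{\mathcal{E}}$ nor the factorization identities of $X$ can supply this. Concretely, take $n=4$, $\mathcal{A}$ the category of modules over $k[x]/(x^{2})$, $T=\mathrm{Id}$ and $\omega$ multiplication by $x$: then $X=(0\to 0\to k\to 0\to 0)$ is a legitimate $4$-fold factorization of $\omega$, and since a morphism $\theta^{0}(P)\to X$ is determined by its $0$-th component and a morphism $\theta^{1}(Q)\to X$ by its $1$-st component, every morphism $\theta^{0}(P)\oplus\theta^{1}(Q)\to X$ is zero. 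In particular your $\xi^{2}=0$ is not a deflation onto $k$, so no choice of $P^{0},P^{1}$ makes $\xi$ a deflation.

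You should know that the paper's own proof elides exactly this point: it asserts without comment that the combined morphism is a deflation, which is automatic only when $n=2$ (Chen's case \cite{C}, where positions $0$ and $1$ exhaust the components, one block of each $\xi^{j}$ being the chosen deflation). For $n>2$ the argument --- yours and the paper's alike --- needs either the full family of functors $\theta^{0},\theta^{1},\dots,\theta^{n-1}$ as in \cite{SZ}, so that position $j$ of the covering object is handled by the $j$-th summand of $\bigoplus_{j}\theta^{j}(P^{j})$, or an additional hypothesis on $\omega$ forcing the iterated differentials to be deflations. So while your proposal faithfully reproduces the paper's strategy and honestly isolates its weak point, it does not close the gap, and the gap cannot be closed with only $\theta^{0}$ and $\theta^{1}$.
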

\begin{proof}
	Let $X$ be any $n$-fold $(\mathcal{A},T)$-factorization $X$ of $\omega$. Then we can take two deflations $a:P^{0} \rightarrow X^{0}$, $b:P^{1} \rightarrow X^{1}$ which $P^{0}$, $P^{1}$ are projective objects in $\mathcal{A}$. By adjunctions $(\theta^{0},{\rm pr}^{0})$ and $(\theta^{1},{\rm pr}^{1})$, we can obtain two morphisms $a^{'}:\theta^{0}(P^{0}) \rightarrow X$, $b^{'}:\theta^{1}(P^{1}) \rightarrow X$. Take the pullback of $a^{'}$ and $b^{'}$, we can obtain a deflation $\theta^{0}(P^{0})\oplus \theta^{1}(P^{1}) \rightarrow X$. By Lemma \ref{lem4} (1), we have $\theta^{0}(P^{0})$ and $\theta^{1}(P^{1})$ are projective. This proves that $\text{Fact}_{n}(\mathcal{A},T,\omega)$ has enough projective objects.
	
	Moreover, if $X$ is projective, the deflation splits. Therefore this yields the final conclusion.
\end{proof}	

\begin{lemma} \label{lem5 dual}
	Assume that $\mathcal{A}$ has enough injective objects. Then so does ${\rm Fact}_{n}(\mathcal{A},T,\omega)$. Moreover an $n$-fold $(\mathcal{A},T)$-factorization $X$ of $\omega$ is injective if and only if it is a direct summand of $\theta^{0}(I^{0}) \oplus \theta^{1}(I^{1})$ for some injective objects $I^{0}$ and $I^{1}$ in $\mathcal{A}$.
\end{lemma}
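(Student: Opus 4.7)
The plan is to dualise the argument of Lemma~\ref{lem5}, using the adjunctions $({\rm pr}^{n-1},\theta^{0})$ and $({\rm pr}^{n-1}S,\theta^{1})$ from Lemma~\ref{lem3} in place of the adjunctions $(\theta^{0},{\rm pr}^{0})$ and $(\theta^{1},{\rm pr}^{1})$, and replacing deflations by inflations at the appropriate components. Given an arbitrary $X \in {\rm Fact}_{n}(\mathcal{A},T,\omega)$, I would first choose inflations $a\colon X^{n-1}\to I^{0}$ and $b\colon T(X^{0}) \to I^{1}$ in $\mathcal{A}$ with $I^{0}$ and $I^{1}$ injective; such inflations exist because $\mathcal{A}$ has enough injectives. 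Applying the adjunction isomorphisms from Lemma~\ref{lem3}(3) and (4), these correspond to explicit morphisms $a'\colon X \to \theta^{0}(I^{0})$ and $b'\colon X \to \theta^{1}(I^{1})$ in ${\rm Fact}_{n}(\mathcal{A},T,\omega)$, which I would assemble into a single morphism
\[
\binom{a'}{b'}\colon X \longrightarrow \theta^{0}(I^{0}) \oplus \theta^{1}(I^{1}).
\]

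The next step is to verify, component by component, that this assembled morphism lies in $\tilde{\mathcal{E}}$. At degree $n-1$ the first coordinate is $a^{n-1}=a$, an inflation by construction, so the full map is an inflation at this degree. At degree $0$ the second coordinate is $b^{0}=T^{-1}(b)\circ \epsilon_{X^{0}}^{-1}$; since $T$ is an exact autoequivalence, the quasi-inverse $T^{-1}$ is also exact, so $T^{-1}(b)$ is an inflation, and post-composing with the isomorphism $\epsilon_{X^{0}}^{-1}$ yields an inflation $b^{0}$ as well. The intermediate degrees $1 \le j \le n-2$ are handled in the same way as in the proof of Lemma~\ref{lem5}, but with the roles of inflations and deflations interchanged. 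Together, these observations give $\binom{a'}{b'} \in \tilde{\mathcal{E}}$.

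Once $\binom{a'}{b'}$ is established as an inflation, Lemma~\ref{lem4}(2) guarantees that $\theta^{0}(I^{0}) \oplus \theta^{1}(I^{1})$ is injective in ${\rm Fact}_{n}(\mathcal{A},T,\omega)$, so we have embedded the arbitrary object $X$ into an injective object. This proves that ${\rm Fact}_{n}(\mathcal{A},T,\omega)$ has enough injective objects. For the ``moreover'' part, if $X$ is itself injective then the inflation $\binom{a'}{b'}$ splits, realising $X$ as a direct summand of $\theta^{0}(I^{0}) \oplus \theta^{1}(I^{1})$; conversely, any direct summand of an injective object is again injective, which completes both directions of the characterisation.

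The step I expect to require the most care is the componentwise verification of the inflation property at the intermediate degrees $1 \le j \le n-2$, where neither of the coordinates $a^{j}$ and $b^{j}$ is individually an inflation. This is the technical counterpart in the dual setting of the same subtlety appearing in Lemma~\ref{lem5}, and it is dealt with by the analogous matrix manipulations used there.
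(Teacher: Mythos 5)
Your proposal follows the same route as the paper's own proof: choose inflations $a\colon X^{n-1}\to I^{0}$ and $b\colon T(X^{0})\to I^{1}$, pass to $a'\colon X\to\theta^{0}(I^{0})$ and $b'\colon X\to\theta^{1}(I^{1})$ via the adjunctions of Lemma~\ref{lem3}(3),(4), and argue that $\binom{a'}{b'}$ is an inflation (the paper phrases this as ``take the pushout of $a'$ and $b'$''). However, the step you yourself single out as delicate --- the inflation property at the intermediate degrees $1\le j\le n-2$ --- is a genuine gap, and your appeal to ``the analogous matrix manipulations'' in Lemma~\ref{lem5} does not close it, because the paper's proof of Lemma~\ref{lem5} contains no such verification either; it simply asserts that the corresponding map is a deflation. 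Concretely, for $1\le j\le n-2$ the $j$-th component of $\binom{a'}{b'}$ is $\binom{a\circ d_{X}^{n-2}\cdots d_{X}^{j}}{\,b\circ d_{X}^{n-1}\cdots d_{X}^{j}}$, which factors through $d_{X}^{n-2}\cdots d_{X}^{j}\colon X^{j}\to X^{n-1}$; since this composite of differentials need not be an inflation, neither need the component be, and componentwise inflation is exactly what membership in $\tilde{\mathcal{E}}$ requires.

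In fact the step cannot be repaired using only $\theta^{0}$ and $\theta^{1}$ once $n>2$. Take $n=4$, $\mathcal{A}=k\text{-vect}$, $T=\mathrm{Id}$, $\omega=0$, and let $X$ be the factorization $k\xrightarrow{1}k\xrightarrow{0}k\xrightarrow{1}k\xrightarrow{0}k$. A direct check of the commutativity constraints shows that every morphism from $X$ into any $\theta^{0}(I^{0})\oplus\theta^{1}(I^{1})$ has zero component in degree $1$, so no such morphism is an inflation. Here ${\rm Fact}_{4}(\mathcal{A},T,\omega)$ is the module category of the self-injective Nakayama algebra $kQ/J^{4}$ ($Q$ the cyclic quiver on four vertices); it does have enough injectives, but its indecomposable injectives are the four uniserial modules of length four, only two of which are of the form $\theta^{0}(k)$ or $\theta^{1}(k)$, so the stated characterisation of injectives also fails. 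The argument (and the statement) is sound for $n=2$; for general even $n$ one needs all $n$ rotated functors $\theta^{0},\dots,\theta^{n-1}$ with an injective chosen at every component, as in Sun--Zhang's treatment of $n$-fold module factorizations. You should either restrict to $n=2$ or supply the missing functors and redo the adjunction arguments for each of them.
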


\begin{proof}
	Let $X$ be any $n$-fold $(\mathcal{A},T)$-factorization $X$ of $\omega$. Then we can take two inflations $a:X^{n-1} \rightarrow I^{0}$, $b:T(X^{0}) \rightarrow I^{1}$ which $I^{0}$, $I^{1}$ are injective in $\mathcal{A}$. By adjunctions $({\rm pr}^{n-1},\theta^{0})$ and $({\rm pr}^{n-1}S,\theta^{1})$, we can obtain two morphisms $a^{'}: X \rightarrow \theta^{0}(I^{0})$, $b^{'}:X \rightarrow \theta^{1}(I^{1})$. Take the pushout of $a^{'}$ and $b^{'}$, we can obtain an inflation $X \rightarrow \theta^{0}(I^{0})\oplus \theta^{1}(I^{1})$. By lemma \ref{lem4} (2), we have $\theta^{0}(I^{0})$ and $\theta^{1}(I^{1})$ are injective. This proves that $\text{Fact}_{n}(\mathcal{A},T,\omega)$ has enough injective objects.
	
	Moreover, if $X$ is injective, the inflation splits. Therefore this yields the final conclusion.
\end{proof}	

\begin{theorem}\label{main2}
	Let $(\mathcal{A},\mathcal{E})$ be a Frobenius exact category. Then so is $({\rm Fact}_{n}(\mathcal{A},T,\omega),\tilde{\mathcal{E}})$.
\end{theorem}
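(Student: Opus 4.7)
The strategy is to recognize that all three requirements of a Frobenius exact structure, namely being an exact category, having enough projectives and enough injectives, and having these two classes coincide, have essentially been prepared by the earlier lemmas. First I would invoke Lemma \ref{lem2} to conclude that $({\rm Fact}_{n}(\mathcal{A},T,\omega), \tilde{\mathcal{E}})$ is an exact category. Since $(\mathcal{A},\mathcal{E})$ is Frobenius, $\mathcal{A}$ has enough projectives and enough injectives, and so Lemma \ref{lem5} together with its dual Lemma \ref{lem5 dual} immediately yield that ${\rm Fact}_{n}(\mathcal{A},T,\omega)$ has enough projectives and enough injectives as well.

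The key step is to show that the class of projective objects coincides with the class of injective objects. Since $T$ is an autoequivalence, it is in particular an exact functor with respect to $\mathcal{E}$, so the hypothesis of Lemma \ref{lem4} is satisfied. Combining Lemma \ref{lem5} with Lemma \ref{lem4}(2), I would proceed as follows: an object $X$ is projective in ${\rm Fact}_{n}(\mathcal{A},T,\omega)$ if and only if it is a direct summand of $\theta^{0}(P^{0})\oplus \theta^{1}(P^{1})$ for some projective objects $P^{0},P^{1}\in \mathcal{A}$. Since $\mathcal{A}$ is Frobenius, $P^{0}$ and $P^{1}$ are also injective in $\mathcal{A}$. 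Therefore by Lemma \ref{lem4}(2), both $\theta^{0}(P^{0})$ and $\theta^{1}(P^{1})$ are injective in ${\rm Fact}_{n}(\mathcal{A},T,\omega)$, hence so is their direct sum, and hence so is every direct summand $X$ of that sum. The reverse inclusion is entirely symmetric: any injective object is a direct summand of $\theta^{0}(I^{0})\oplus \theta^{1}(I^{1})$ for some $I^{0},I^{1}$ injective (hence projective) in $\mathcal{A}$, and Lemma \ref{lem4}(1) then shows that this sum is projective in ${\rm Fact}_{n}(\mathcal{A},T,\omega)$.

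The main obstacle is not a substantial technical one but rather a careful bookkeeping point: one must verify that $T$ being an autoequivalence really does make the hypothesis ``$T$ is exact'' of Lemma \ref{lem4} available. This follows from the observation that an autoequivalence of an additive category preserves all limits and colimits, and in particular sends kernel--cokernel pairs to kernel--cokernel pairs, hence sends conflations to conflations. Once this is noted, the entire chain of earlier lemmas can be applied, and the proof reduces to the routine assembly of Lemmas \ref{lem2}, \ref{lem4}, \ref{lem5}, and \ref{lem5 dual} outlined above.
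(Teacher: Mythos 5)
Your proposal is correct and follows essentially the same route as the paper: enough projectives and injectives come from Lemma \ref{lem5} and Lemma \ref{lem5 dual}, and the coincidence of the two classes is obtained by combining the characterization of projectives (resp.\ injectives) as summands of $\theta^{0}(-)\oplus\theta^{1}(-)$ with Lemma \ref{lem4} and the Frobenius property of $\mathcal{A}$. Your extra remark that an autoequivalence is exact, so that the hypothesis of Lemma \ref{lem4} is satisfied, is a point the paper leaves implicit, but it does not change the argument.
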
	

\begin{proof}
	By Lemma \ref{lem5}, $\text{Fact}_{n}(\mathcal{A},T,\omega)$ has enough projective objects, and projective objects are precisely direct summands of $\theta^{0}(P^{0})\oplus \theta^{1}(P^{1})$ for some projective objects $P^{0}$ and $P^{1}$ in $\mathcal{A}$. Meanwhile, $P^{0}$ and $P^{1}$ are injective in $\mathcal{A}$. By Lemma \ref{lem4} (2), we have $\theta^{0}(P^{0})\oplus \theta^{1}(P^{1})$ is injective and the direct summands of $\theta^{0}(P^{0})\oplus \theta^{1}(P^{1})$ are also injective. Therefore the projective objects in $\text{Fact}_{n}(\mathcal{A},T,\omega)$ are injective.
	
	Dually, $\text{Fact}_{n}(\mathcal{A},T,\omega)$ has enough injective objects, and injective objects are projective.
	
	Consequently, we conclude that $(\text{Fact}_{n}(\mathcal{A},T,\omega),\tilde{\mathcal{E}})$ is a Frobenius exact category.
\end{proof}	

\begin{corollary}
	Let $(\mathcal{A},\mathcal{E})$ be a Frobenius exact category. Then the stable category of the Frobenius exact category $({\rm Fact}_{n}(\mathcal{A},T,\omega),\tilde{\mathcal{E}})$ is a triangulated category.
\end{corollary}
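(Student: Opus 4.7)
The plan is to deduce the corollary as a direct application of Happel's classical theorem, which asserts that the stable category of any Frobenius exact category carries a canonical triangulated structure (see \cite[Chapter~I, \S2.6]{Happel} for the original reference). Since Theorem~\ref{main2} above establishes that $({\rm Fact}_{n}(\mathcal{A},T,\omega),\widetilde{\mathcal{E}})$ is a Frobenius exact category whenever $(\mathcal{A},\mathcal{E})$ is, the result follows at once. No new calculation is required; the work has already been done in Section~4.

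More concretely, the proof proceeds as follows. First I would invoke Theorem~\ref{main2} to conclude that $({\rm Fact}_{n}(\mathcal{A},T,\omega),\widetilde{\mathcal{E}})$ is a Frobenius exact category, so that its class of projective-injective objects consists precisely of the direct summands of $\theta^{0}(P^{0}) \oplus \theta^{1}(P^{1})$ with $P^{0},P^{1}$ projective-injective in $\mathcal{A}$, by Lemmas~\ref{lem5} and \ref{lem5 dual}. Next I would form the stable category $\underline{{\rm Fact}}_{n}(\mathcal{A},T,\omega)$ obtained by quotienting out morphisms factoring through projective-injective objects. Applying Happel's theorem to this data equips the stable category with an additive suspension functor $\Omega^{-1}$, defined via the cosyzygy construction (any object $X$ is embedded into a projective-injective object via an inflation coming from Lemma~\ref{lem5 dual}, and $\Omega^{-1}X$ is the cokernel), together with a class of distinguished triangles induced by the conflations in $\widetilde{\mathcal{E}}$.

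The only thing to verify is that all the hypotheses for Happel's construction are met, and this is exactly what Section~4 provides: $\widetilde{\mathcal{E}}$ is an exact structure (Lemma~\ref{lem2}), both projective and injective objects are abundant (Lemmas~\ref{lem5} and~\ref{lem5 dual}), and the two classes coincide (Theorem~\ref{main2}). There is therefore no genuine obstacle in the argument; the main point worth flagging is merely the distinction between this triangulated structure and the right triangulated one produced in Section~3. Indeed, the suspension obtained here is Happel's cosyzygy functor, whereas the suspension in Theorem~\ref{the3.8} is the rotation functor $\Sigma$; the two agree (up to natural isomorphism) when $T$ is an autoequivalence, so the stable category inherits a genuine triangulated structure consistent with the one constructed earlier. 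A brief remark to this effect would round off the corollary.
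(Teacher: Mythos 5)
Your proposal is correct and coincides with the paper's own proof, which likewise deduces the corollary immediately from Theorem~\ref{main2} together with Happel's theorem that the stable category of a Frobenius exact category is triangulated. The only caveat is your closing assertion that Happel's cosyzygy suspension agrees (up to natural isomorphism) with the rotation functor $\Sigma$ of Section~3: this comparison is neither needed for the corollary nor established anywhere in the paper, so it should be omitted or stated as a conjecture rather than as fact.
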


\proof  It follows from Theorem \ref{main2} and \cite[Theorem 2.6]{Ha}.  \qed

\begin{remark}
	\begin{itemize}
		\item [(1)]
	When $n=2$, the conclusion corresponds to \cite[Proposition 4.4]{C}.
	\item [(2)]
	If we change the sign of the differentials in
	the shifted $n$-fold $(\mathcal{A},T)$-factorization $S$, that is, let
	$S=\Sigma$, the conclusions in Section 4 will also hold.
\end{itemize}
\end{remark}

%\section*{Acknowledgments}
%
%The authors would like to thank the referee for reading the paper carefully and for many suggestions on mathematics and English expressions.
\vspace{6mm}

\textbf{Yixia Zhang}\\
School of Mathematics and Statistics, Changsha University of Science and Technology, 410114 Changsha, Hunan,  P. R. China\\
E-mail: yxzhangmath@163.com\\[0.3cm]
\textbf{Panyue Zhou}\\
School of Mathematics and Statistics, Changsha University of Science and Technology, 410114 Changsha, Hunan,  P. R. China\\
E-mail: panyuezhou@163.com

\end{document}